\documentclass[12pt]{article}
\DeclareMathAlphabet\mathbfcal{OMS}{cmsy}{b}{n}
\usepackage[dvipsnames]{xcolor}
\usepackage{enumerate}
\usepackage[utf8]{inputenc}
\usepackage{amsfonts}
\usepackage{amsthm}
\usepackage{lscape}
\usepackage{mathrsfs}
\usepackage{graphicx}
\usepackage{tikz-cd}
\usepackage{calc}
\usepackage{float}
\usepackage{multirow}
\usepackage{lipsum}
\usepackage{amssymb}
\usepackage{mathrsfs}
\usepackage{mathtools}
\usepackage{mathdots}
\usepackage{fancyhdr}
\usepackage{tikz}
\usepackage{faktor}
\usepackage{setspace}
\usetikzlibrary{decorations.markings}

\theoremstyle{plain}
\usepackage{sectsty}
\usepackage{kpfonts}
\sectionfont{\fontsize{14}{15}\selectfont}
\subsectionfont{\fontsize{12}{15}\selectfont}
\newtheorem{corollary}{Corollary}

\newtheorem{lemma}{Lemma}

\newtheorem{proposition}{Proposition}
\newtheorem{remark}{Remark}

\newcommand{\Gro}{\textbf{\Gr}}
\newcommand{\Gr}{{\mbox{Gr}}}

\newcommand{\GL}{{\mbox{GL}}}
\newcommand{\Hom}{{\mbox{Hom}}}

\newcommand{\M}{{\mbox{M}}}

\newcommand{\Aut}{{\mbox{Aut}}}
\newcommand{\hol}{{\mbox{hol}}}
\newcommand{\End}{{\mbox{End}}}

\newcommand{\T}{{\mbox{T}}}
\newcommand{\Aff}{{\mbox{Aff}}}

\newcommand{\im}{{\mbox{Im}}}

\newcommand{\Man}{\textbf{-Man}}

\newcommand{\Eig}{{\mbox{Eig}}}
\newcommand{\Span}{{\mbox{Span}}}

\newtheorem*{observation*}{Observation}
\newtheorem*{theorem*}{Theorem}
\newtheorem*{claim*}{Claim}  
\usepackage{blindtext}
\title{\textbf{Geometry of universal embedding spaces for almost complex manifolds}}
\author{Gabriella Clemente}
\date{}
\begin{document}
\maketitle

\begin{abstract}
We study the geometry of universal embedding spaces for compact almost complex manifolds of a given dimension. These spaces are complex algebraic analogues of twistor spaces that were introduced by J-P.\ Demailly and H.\ Gaussier. Their original goal was the study of a conjecture made by F.\ Bogomolov, asserting the ``transverse embeddability" of arbitrary compact complex manifolds into foliated algebraic varieties. In this work, we introduce a more general category of universal embedding spaces, and elucidate the geometric structure of the integrability locus characterizing integrable almost complex structures. Our approach can potentially be used to investigate the existence (or non-existence) of topological obstructions to integrability, and to tackle Yau's Challenge.
\end{abstract} 

\tableofcontents
\addcontentsline{toc}{section}{Introduction}
\section*{Introduction}
J-P.\ Demailly and H.\ Gaussier proved that every compact almost complex manifold $(X,J_X)$ of dimension $n$ admits an embedding $F:X \hookrightarrow Z$ that is transverse to an algebraic distribution $\mathcal{D} \subset T_Z$ (cf.\ Theorem 1.2 \cite{DG}), and that if $J_X$ is integrable, $\im(\bar{\partial}_{J_X} F)$ is contained in a subvariety $\mathcal{I}$ of the Grassmannian bundle $Gr^{\mathbb{C}}(\mathcal{D},n) \to Z$ that is the isotropic locus of the torsion tensor $\theta:\mathcal{D} \times \mathcal{D} \to {T_Z}/\mathcal{D},$ $\theta(\zeta,\eta)=[\zeta,\eta] \mod{\mathcal{D}}$ (cf.\ Theorem 1.6 \cite{DG}). J-P.\ Demailly proposed using this result to study 

\textbf{Yau's Challenge:} Determine whether there exists a compact almost complex manifold of dimension at least $3$ that cannot be given a complex structure \cite{YC}. 

The proposed strategy is most promising when the homotopy classes of almost complex structures are well understood. This is the case for the (oriented) sphere $S^6$ since any almost complex structure on $S^6$ is homotopic to the well-known octonion almost complex structure.

Let $J$ be a hypothetical integrable almost complex structure on $X$ that is homotopic to $J_X.$ A (smooth) homotopy $J_t$ such that $J_0=J_X$ and $J_1=J$ produces an isotopy of transverse to $\mathcal{D}$ embeddings $F_t:(X,J_t) \hookrightarrow Z,$ giving rise, in turn, to a homotopy $\im(\bar{\partial}_{J_t} F_t (x)):[0,1] \times X \rightarrow \Gr^{\mathbb{C}}(\mathcal{D},n),$ where $\im(\bar{\partial}_{J_0} F_0 (x)) \subset \Gr^{\mathbb{C}}(\mathcal{D},n)$ and $\im(\bar{\partial}_{J_1} F_1 (x)) \subset \mathcal{I}.$ The original proposal was to study the topology of $\mathcal{I}$ relative to $\Gr^{\mathbb{C}}(\mathcal{D},n)$ independently in hopes of detecting an obstruction to the existence of the homotopy $\im(\bar{\partial}_{J_t} F_t (x))$ \cite{CIME}. It is unclear if such an obstruction exists at all. But if it did, then in the special case of a homotopically unique $J_X,$ the conclusion would be that $X$ cannot be a complex manifold. 

A refinement of the initial proposal can be obtained by replacing $\Gr^{\mathbb{C}}(\mathcal{D},n)$ with a Zariski open subset $\Gr^\mathrm{o}(\mathcal{D},n) \subset \Gr^{\mathbb{C}}(\mathcal{D},n)$ and $\mathcal{I}$ with its intersection $\mathcal{I}^\mathrm{o}$ with $\Gr^\mathrm{o}(\mathcal{D},n).$ This is explained in section 2.2. Then, it is possible to reformulate the strategy in terms of the topology and geometry of the quotient $\Gr^\mathrm{o}(\mathcal{D},n)/\mathcal{I}^\mathrm{o},$ and to interpret said reformulation as a linearization of the problem of deciding when an almost complex structure is integrable. This is mentioned in the concluding remarks, and will be the topic of future work.

The main goal of this article is to study the geometry of spaces involved in the aforementioned improvements of the initial strategy. Additionally, we provide a generalization of the embedding theory that was incepted by J-P.\ Demailly and H.\ Gaussier in \cite{DG}.

\textbf{Acknowledgment} I thank Jean-Pierre Demailly for meaningful discussions. I thank the European Research Council for financial support from the grant ``Algebraic and K{\"a}hler geometry" (ALKAGE, no.\ 670846). This project emerged at the CIME School on Non-K{\"a}hler geometry \cite{CIME}.

\section{Universal embedding spaces associated to \newline even-dimensional complex manifolds}

A \emph{complex directed manifold} is a pair $(X,\mathcal{D})$ of complex manifold $X$ and distribution $\mathcal{D} \subset T_X,$ i.e.\  $\mathcal{D}$ is a holomorphic sub-bundle of the tangent bundle of $X.$ Complex directed manifolds form a category whose morphisms are holomorphic maps $\Psi:X \rightarrow X'$ with $\Psi_{*}(\mathcal{D}) \subset \mathcal{D}'$ \cite{directed}. A morphism is \emph{\'{e}tale} if it is a local isomorphism. For example, an \'{e}tale morphism of real analytic manifolds is a real analytic map that is locally a diffeomorphism and an \'{e}tale morphism of complex manifolds is a holomorphic map that is locally a biholomorphism. If $(X,J)$ and $(X',J')$ are almost complex manifolds, a map $f:X \rightarrow X'$ is \emph{pseudo-holomorphic} provided that it satisfies the corresponding Cauchy-Riemann equation with $\bar{\partial}_{J,J'} f:=\frac{1}{2} (df + J' \circ df \circ J).$ If $W^{\mathbb{R}}$ is a real analytic manifold and $\dim_{\mathbb{R}}(W^{\mathbb{R}})=m,$ by the \emph{complexification of} $W^{\mathbb{R}},$ we mean the unique (germ of the) complex manifold $W$ that results from selecting an atlas on $W^{\mathbb{R}}$ and complexifying each of its real analytic transition functions near the real points. The complexification $W$ is such that $\dim_{\mathbb{C}}(Y)=m,$ and it possesses a natural anti-holomorphic involution whose set of fixed points is $W^{\mathbb{R}}.$

Let $k \geq n \geq 1$ and $Y$ be a complex manifold of complex dimension $2k.$ For every $y \in Y,$ consider the complex projective manifold of flags of signature $(k-n,k)$ in $T_{Y,y}$  

\begin{equation*}
\begin{split}
F_{(k-n,k)}(T_{Y,y})=&\{(S',\Sigma') \big| S' \subset \Sigma' \subset T_{Y,y} \mbox{ is a sequence of linear subspaces, }\\
&\dim_{\mathbb{C}}(S')=k-n\mbox{ and }\dim_{\mathbb{C}}(\Sigma')=k\}
\end{split}
\end{equation*} and the product manifold 

\[F^2_{(k-n,k)}(T_{Y,y})=\{(S',S'',\Sigma',\Sigma'') \big| (S',\Sigma'), (S'',\Sigma'') \in F_{(k-n,k)}(T_{Y,y})\}.\] Let \[Q_y=\{(S',S'',\Sigma',\Sigma'')\in F^2_{(k-n,k)}(T_{Y,y}) \big| \Sigma' \oplus \Sigma''=T_{Y,y} \}.\] Define \[\mathcal{Z}_n(Y):= \coprod_{y \in Y} Q_y.\] This is a complex manifold of complex dimension \[N_{n,k}:=2k+2(k^2+n(k-n)),\] and it bears a resemblance to twistor bundles and Grassmannians \cite{DG}. Let $\pi_Y:\mathcal{Z}_n(Y) \rightarrow Y$ be the projection map defined for any $y \in Y$ and $q_y \in Q_y$ by $\pi_Y(y,q_y)=y.$ 

Define $\mathbf{\Delta}_{n,k}$ to be the sub-bundle of $\pi^*_Y(T_Y)$ such that for any $w=(y,S',S'',\Sigma',\Sigma'') \in \mathcal{Z}_n(Y),$ we have $\mathbf{\Delta}_{n,k,w}=S' \oplus \Sigma''.$ Now define a distribution $\mathbf{D}_{n,k} \subset T_{\mathcal{Z}_n(Y)}$ by $\mathbf{D}_{n,k}:=d \pi^{-1}_Y(\mathbf{\Delta}_{n,k}).$ 

In the case $Y=\mathbb{C}^{2k},$ if we let \[Q:=\{(S',S'',\Sigma',\Sigma'') \in F^2_{(k-n,k)} (\mathbb{C}^{2k}) | \Sigma' \oplus \Sigma''=\mathbb{C}^{2k}\},\] then we simply get that $\mathcal{Z}_n(\mathbb{C}^{2k})=\mathbb{C}^{2k} \times Q.$ Therefore, the above construction recovers the one made by J-P.\ Demailly and H.\ Gaussier, who introduced the complex directed manifolds $(Z_{n,k},\mathcal{D}_{n,k}),$ where $Z_{n,k}$ is the complex, quasi-projective manifold of all $5$-tuples \[\{(z,S',S'',\Sigma',\Sigma'') \mid z \in \mathbb{C}^{2k}, (S',S'',\Sigma',\Sigma'') \in Q\}\] and $\mathcal{D}_{n,k}$ is the co-rank $n$ sub-bundle of $T_{Z_{n,k}},$ whose fiber at any $w \in Z_{n,k}$ is \[\mathcal{D}_{n,k,w}=\{(\zeta, u',u'',v',v'') \in T_{Z_{n,k},w} \big| \zeta \in S' \oplus \Sigma''\}\] (cf.\ Theorem 1.2 \cite{DG}). For $n$ fixed, the above defined complex directed manifold $(\mathcal{Z}_n(Y),\mathbf{D}_{n,k})$ will be called here the \emph{universal embedding space associated with} $Y,$ as we will soon see its universal property.

\begin{remark}\label{tri}
$\mathcal{Z}_n(Y) \xrightarrow{\pi_Y} Y$ is a (holomorphic) fiber bundle with typical fiber $Q,$ and if $Y=\mathbb{C}^{2k},$ the bundle is trivial. The universal embedding space $\mathcal{Z}_n(Y)$ is locally diffeomorphic to $\mathcal{Z}_n(\mathbb{C}^{2k}).$
\end{remark}

\begin{proof}
Given a holomorphic atlas $(U_{\alpha}, \psi_{\alpha})$ for $Y$ and $p \in U_{\alpha},$ the isomorphism $T_{Y,p} \simeq \mathbb{C}^{2k}$ induces a biholomorphism $Q_p \simeq_{q_{\alpha}} Q,$ and so $(U_{\alpha},Id_{U_{\alpha}} \times q_{\alpha})$ is a local trivialization.

Now observe that $\mathcal{Z}_n(U_{\alpha})=\pi^{-1}_Y(U_{\alpha}) \simeq U_{\alpha} \times Q \simeq \mathbb{C}^{2k} \times Q=\mathcal{Z}_n(\mathbb{C}^{2k}),$ where $\mathcal{Z}_n(U_{\alpha})=\coprod_{p \in U_{\alpha}} Q_p,$ where the first identification comes from the local trivialization and the second one, from the map $\psi_{\alpha}:U_{\alpha} \rightarrow \mathbb{C}^{2k}.$ 
\end{proof}

\subsection{Universal embedding property}

Next we construct embeddings of compact almost complex manifolds. Our approach is based on that used in the proof of Theorem 1.2 of \cite{DG}. Throughout, $(X,J_X)$ is a compact almost complex manifold of dimension $n \geq 1.$

\begin{proposition}\label{caliber}
Assume that there is a $C^{\infty}$ real embedding of $X$ into a real analytic $2k$-dimensional manifold $Y^{\mathbb{R}}.$ Assume further that $N_{X/Y^{\mathbb{R}}}$ admits a complex structure $J_N.$ Then, there is a totally real embedding $F:X \hookrightarrow \mathcal{Z}_n(Y)$ that is transverse to $\mathbf{D}_{n,k}$ and that induces the almost complex structure $J_X.$
\end{proposition}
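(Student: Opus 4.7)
The plan is to construct $F$ as a canonical ``Gauss-type'' map that records at each $x \in X$ the flag data produced by the $(1,0)$/$(0,1)$ decompositions of $T_{X,x}$ and the normal space $N_x := N_{X/Y^{\mathbb{R}}, x}$ relative to $J_X$ and $J_N$. First I would fix a Riemannian metric on $Y^{\mathbb{R}}$ to obtain a smooth splitting $T_{Y^{\mathbb{R}}}|_X = T_X \oplus N$, and write $g : X \hookrightarrow Y^{\mathbb{R}} \hookrightarrow Y$ for the inclusion into the totally real locus of the complexification. For each $x$, using the identification $T_{Y,g(x)} \simeq (T_{X,x} \otimes_{\mathbb{R}} \mathbb{C}) \oplus (N_x \otimes_{\mathbb{R}} \mathbb{C})$ and the $\pm i$-eigenspace decompositions for the $\mathbb{C}$-linear extensions of $J_X$ and $J_N$, I would set
\[
F(x) := \bigl(g(x),\; N_x^{1,0},\; N_x^{0,1},\; T_{X,x}^{1,0} \oplus N_x^{1,0},\; T_{X,x}^{0,1} \oplus N_x^{0,1}\bigr).
\]
Then $\Sigma' \oplus \Sigma'' = T_{Y,g(x)}$ and the flag dimensions match those in the definition of $\mathcal{Z}_n(Y)$, so $F$ is a well-defined smooth map into $\mathcal{Z}_n(Y)$.

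Next I would check that $F$ is a totally real embedding. Injectivity of $F$ and of each $dF_x$ is inherited from $\pi_Y \circ F = g$, which is a smooth embedding. Total reality is equivalent to the injectivity of the $\mathbb{C}$-linear extension of $dF_x$; since $\pi_Y$ is holomorphic, this reduces to injectivity of the $\mathbb{C}$-linear extension of $dg_x : T_{X,x} \to T_{Y,g(x)}$, which holds because $g(X) \subset Y^{\mathbb{R}}$ lies in the real locus of the complexification $Y$.

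For transversality with $\mathbf{D}_{n,k}$, observe that $\mathbf{D}_{n,k}$ has real corank $2n = \dim_{\mathbb{R}} X$, so it suffices to verify $d\pi_Y(dF(T_{X,x})) + (S' \oplus \Sigma'') = T_{Y,g(x)}$, where $S' \oplus \Sigma'' = N_x^{1,0} \oplus T_{X,x}^{0,1} \oplus N_x^{0,1}$. Only the $T_{X,x}^{1,0}$-summand of $T_{Y,g(x)}$ is missing from $S' \oplus \Sigma''$, and every $v \in T_{X,x}$ projects to $\tfrac{1}{2}(v - iJ_X v)$ in $T_{X,x}^{1,0}$ modulo $T_{X,x}^{0,1}$, giving the required surjectivity.

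Finally, to show $F$ induces $J_X$, I would use the direct-sum splitting $T_{\mathcal{Z}_n(Y), F(x)} = dF(T_{X,x}) \oplus \mathbf{D}_{n,k,F(x)}$ (supplied by transversality together with the dimension match) to define an almost complex structure $J^F_X$ on $X$ by the rule $dF(J^F_X v) \equiv J_{\mathcal{Z}_n(Y)}\, dF(v) \pmod{\mathbf{D}_{n,k}}$. Projecting this identity via the $\mathbb{C}$-linear map $d\pi_Y$ reduces the equality $J^F_X = J_X$ to the assertion that $iv - J_X v \in S' \oplus \Sigma''$ for every $v \in T_{X,x}$, which is immediate since $iv - J_X v = (-J_X v) + i J_X(-J_X v) \in T_{X,x}^{0,1} \subset \Sigma''$. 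The main conceptual hurdle is pinning down precisely what ``$F$ induces $J_X$'' should mean—either through the quotient splitting just used, or through the $\bar{\partial}_{J_X} F$ interpretation alluded to in the introduction; once that is fixed, the verification is the one-line linear-algebra identity in the $(1,0)$/$(0,1)$ decomposition above. Everything else is a direct dimension count combined with the observation that the real locus of a complexification is totally real.
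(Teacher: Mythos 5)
Your construction is exactly the paper's: the flag $\bigl(g(x), N_x^{1,0}, N_x^{0,1}, T^{1,0}_{X,x}\oplus N_x^{1,0}, T^{0,1}_{X,x}\oplus N_x^{0,1}\bigr)$ is precisely the paper's $(f(x),S'_x,S''_x,\Sigma'_x,\Sigma''_x)$, and the transversality argument (corank count plus the observation that only $T^{1,0}_{X,x}$ is missing from $S'\oplus\Sigma''$) is the same dimension bookkeeping. The only cosmetic differences are that the paper establishes total reality by showing $F(X)$ lies in the real locus $\mathcal{Z}_n(Y)^{\mathbb{R}}$ of the involution $\tilde{\sigma}$ while you check injectivity of the complexified differential directly, and your one-line identity $iv - J_Xv = -J_Xv + iJ_X(-J_Xv)\in T^{0,1}_{X,x}$ verifying $J_F = J_X$ is in fact more explicit than the paper's corresponding step.
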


\begin{proof}
Define $\tilde{J}:=J_X \oplus J_N,$ which is a complex structure on $T_{Y^{\mathbb{R}}}|_X \simeq T_X \oplus N_{X/{Y^{\mathbb{R}}}}.$ Consider the $J_Y$-complexification of $\tilde{J},$ $\tilde{J}^{\mathbb{C}}=J^{\mathbb{C}}_X \oplus J^{\mathbb{C}}_N:T_{Y}|_X \to T_{Y}|_X,$ and the sub-bundle $S:=\{0\} \oplus N^{\mathbb{C}}_{X/{Y^{\mathbb{R}}}}$ of $T_{Y}|_X.$ For any $x \in X$ and $(0,\eta_x) \in S_x=\{0\} \oplus N^{\mathbb{C}}_{X/{Y^{\mathbb{R}}}, x},$ $\tilde{J}^{\mathbb{C}}(x)(0,\eta_x)=(0, J^{\mathbb{C}}_N(\eta_x)) \in S_x,$ implying that $S_x$ is $\tilde{J}^{\mathbb{C}}(x)$-stable so that $\tilde{J}^{\mathbb{C}}(x)|_{S_x} \in \End(S_x).$ Let $\Sigma'_x$ be the $+i$ eigenspace for $\tilde{J}^{\mathbb{C}}(x)$ and $\Sigma''_x$ be the $-i$ eigenspace for $\tilde{J}^{\mathbb{C}}(x).$ Then, the $+i,$ respectively $-i,$ eigenspaces for $\tilde{J}^{\mathbb{C}}(x)|_{S_x}$ are $S'_x:=S_x \cap \Sigma'_x$ and $S''_x:=S_x \cap \Sigma''_x.$ More explicitly, these eigenspaces are $\Sigma'_x=T^{1,0}_{X,x} \oplus \Eig(J^{\mathbb{C}}_N (x), i),$ $\Sigma''_x=T^{0,1}_{X,x} \oplus \Eig(J^{\mathbb{C}}_N (x), -i),$ $S'_x=\{0\} \oplus \Eig(J^{\mathbb{C}}_N (x), i),$ and $S''_x=\{0\} \oplus \Eig(J^{\mathbb{C}}_N (x), -i).$ Note that $S'_x \subset \Sigma'_x,$ $S''_x \subset \Sigma''_x,$ $\Sigma'_x \oplus \Sigma''_x = T_Y|_{X,x},$ and $S_x =S'_x \oplus S''_x,$ where $\dim_{\mathbb{C}}(\Sigma'_x)=\dim_{\mathbb{C}}(\Sigma''_x)=\frac{1}{2} \dim_{\mathbb{C}}(T_Y|_{X,x})=k$ and $\dim_{\mathbb{C}}(S'_x)=\dim_{\mathbb{C}}(S''_x)=\frac{1}{2} \dim_{\mathbb{C}}(N^{\mathbb{C}}_{X/{Y^{\mathbb{R}},x}})=\frac{1}{2}(2k-2n)=k-n.$ Therefore, if $f:X \hookrightarrow Y^{\mathbb{R}}$ is the given $C^{\infty}$ real embedding, for any $x \in X$--in reality, $x$ belongs to $f(X)$--we have that $(S'_x,S''_x,\Sigma'_x,\Sigma''_x) \in Q_x=\{(S',S'',\Sigma',\Sigma'')\in F^2_{(k-n,k)}(T_{Y,x}) \mid \Sigma' \oplus \Sigma'' = T_{Y,x}\},$ and in this way, get an embedding $F:X \hookrightarrow \mathcal{Z}_n(Y),$ where $F(x)=(f(x),S'_x,S''_x,\Sigma'_x,\Sigma''_x).$

Let $\sigma:Y \rightarrow Y,$ $y \mapsto \overline{y},$ be an anti-holomorphic involution that realizes $Y^{\mathbb{R}}$ as its fixed point set. For 
$(S',S'',\Sigma',\Sigma'') \in Q_y,$ put $\overline{S'}:=d\sigma|_y(S') \subset T_{Y,\overline{y}},$ $\overline{\Sigma'}:=d \sigma|_y(\Sigma') \subset T_{Y,\overline{y}},$ and define $\overline{S''}$ and $\overline{\Sigma''}$ similarly. Then, $\sigma$ gives rise to the anti-holomorphic involution $\tilde{\sigma}:\mathcal{Z}_n(Y) \rightarrow \mathcal{Z}_n(Y),$ \[(y,S',S'',\Sigma',\Sigma'') \mapsto (\overline{y}, \overline{S''}, \overline{S'}, \overline{\Sigma''}, \overline{\Sigma'}).\] The real points $\mathcal{Z}_n(Y)^{\mathbb{R}}$ of $\mathcal{Z}_n(Y)$ are the fixed points of $\tilde{\sigma},$ so \[\mathcal{Z}_n(Y)^{\mathbb{R}}=\{(y,S',S'',\Sigma',\Sigma'') \in \mathcal{Z}_n(Y) \mid y \in Y^{\mathbb{R}}, S''=\overline{S'}, \Sigma''=\overline{\Sigma'}\}.\] The anti-holomorphic character of $\sigma$ implies that $d \sigma$ is type-reversing and point-wise conjugate linear, so $\overline{\Sigma'_x}=T^{0,1}_{X,x} \oplus \Eig(J^{\mathbb{C}}_N (x), -i)=\Sigma''_x$ and similarly, $\overline{S'_x}=S''_x.$ Therefore, $F(X) \subset \mathcal{Z}_n(Y)^{\mathbb{R}}.$ 

Since $\dim_{\mathbb{R}}(dF|_x(T_{X,x}))+\dim_{\mathbb{R}}(\mathbf{D}_{n,k,F(x)})=2N_{n,k}=\dim_{\mathbb{R}}(T_{\mathcal{Z}_n(Y),F(x)}),$ the embedding $F$ is transverse to $\mathbf{D}_{n,k,F(x)}$ if $dF|_x(T_{X,x})$ and $\mathbf{D}_{n,k,F(x)}$ intersect trivially. But the latter follows from $d \pi_Y|_{F(x)}(dF|_x(T_{X,x}) \cap \mathbf{D}_{n,k,F(x)})=df|_x(T_{X,x}) \cap S'_x \oplus \Sigma''_x=\{0\}.$ Hence $T_{\mathcal{Z}_n(Y),F(x)}=dF|_x(T_{X,x}) \oplus \mathbf{D}_{n,k,F(x)}.$

Let $J_{\mathcal{Z}_n(Y)}$ be the given complex structure on $\mathcal{Z}_n(Y).$ The quotient $T_{\mathcal{Z}_n(Y)}/{\mathbf{D}_{n,k}}$ is a holomorphic vector bundle on $\mathcal{Z}_n(Y).$ Since $\mathbf{D}_{n,k} \subset T_{\mathcal{Z}_n(Y)}$ is a holomorphic distribution, $J_{\mathcal{Z}_n(Y)}(\mathbf{D}_{n,k})=\mathbf{D}_{n,k}.$ So $J_{\mathcal{Z}_n(Y)}$ descends to a complex structure on $T_{\mathcal{Z}_n(Y)}/{\mathbf{D}_{n,k}}.$ The transversality of $F$ implies that at any $x \in X,$ there is a real isomorphism $\rho:T_{F(X),F(x)} \to T_{\mathcal{Z}_n(Y),F(x)}/{\mathbf{D}_{n,k,F(x)}}.$ Then, $J^{\mathcal{Z}_n(Y),\mathbf{D}_{n,k}}_{F(X)}(x):=\rho^{-1} \circ J_{\mathcal{Z}_n(Y)}(x) \circ \rho$ defines an almost complex structure $J^{\mathcal{Z}_n(Y),\mathbf{D}_{n,k}}_{F(X)}$ on $F(X),$ and then since $F$ is an embedding, the pullback section $J_F:=F^*(J^{\mathcal{Z}_n(Y),\mathbf{D}_{n,k}}_{F(X)})$ is an almost complex structure on $X.$ Note that \[T_{\mathcal{Z}_n(Y),F(x)}/{\mathbf{D}_{n,k,F(x)}} \simeq \Sigma'_x/S'_x,\] and so $T_{\mathcal{Z}_n(Y),F(x)}/{\mathbf{D}_{n,k,F(x)}}$ is isomorphic to the holomorphic tangent space $T^{1,0}_{X,x},$ which is $T_{X,x}$ endowed with the complex structure $J^{\mathbb{C}}_X(x).$ Run the above construction, with $T^{1,0}_{X,x}$ now playing the role of $T_{\mathcal{Z}_n(Y),F(x)}/{\mathbf{D}_{n,k,F(x)}},$ to find that $J_F=J_X.$
\end{proof}

\begin{lemma}\label{diagonal}
Let $M$ be a real analytic manifold of dimension $k > n,$ let $f:X \hookrightarrow M$ be a $C^{\infty}$ real embedding, and $i_{\Delta}:M \hookrightarrow M \times M$ be the diagonal embedding $i_{\Delta}(x)=(x,x).$ Embed $X$ into $M \times M$ via $i_{\Delta} \circ f.$ Then, the normal bundle $N_{X/{M \times M}}$ has a natural complex structure $J_N.$
\end{lemma}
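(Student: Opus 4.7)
The plan is to obtain a natural decomposition of the normal bundle and assemble a complex structure from its pieces. Since the embedding $i_\Delta \circ f$ factors through the diagonal as $X \hookrightarrow \Delta_M \hookrightarrow M \times M$, I would begin by applying the short exact sequence of normal bundles
\[
0 \to N_{X/\Delta_M} \to N_{X/(M \times M)} \to N_{\Delta_M/(M \times M)}|_X \to 0.
\]
The diagonal $\Delta_M \subset M \times M$ admits a canonical complement, the anti-diagonal $A = \{(v, -v) : v \in T_M\} \subset T_M \oplus T_M$, which gives a canonical splitting $T_{M \times M}|_{\Delta_M} = T_{\Delta_M} \oplus A$ and hence a canonical identification $N_{\Delta_M/(M \times M)} \simeq T_M$. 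Combined with $N_{X/\Delta_M} \simeq N_{X/M}$ (via $\Delta_M \simeq M$), this splits the sequence above and yields the canonical isomorphism
\[
N_{X/(M \times M)} \simeq N_{X/M} \oplus f^* T_M.
\]

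Next, I would use that $f^* T_M$ itself fits in the short exact sequence $0 \to df(T_X) \to f^* T_M \to N_{X/M} \to 0$. A smooth complement $E \subset f^* T_M$ to $df(T_X)$ (for example, from a Riemannian metric on $M$) gives $f^* T_M = df(T_X) \oplus E$ with $E \simeq N_{X/M}$, so that
\[
N_{X/(M \times M)} \simeq N_{X/M} \oplus T_X \oplus N_{X/M}.
\]
I would then define $J_N$ as the direct sum of $J_X$ on the middle factor $T_X$ and the swap complex structure $(a, b) \mapsto (-b, a)$ on the two copies of $N_{X/M}$. Since both $J_X$ and the swap square to $-\mathrm{Id}$, so does $J_N$, giving an almost complex structure on $N_{X/(M \times M)}$.

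The main subtlety concerns the word \emph{natural}. The decomposition via the anti-diagonal uses only the product structure of $M \times M$ and is genuinely canonical, while $J_X$ is part of the data. The remaining ingredient, the smooth complement $E$, is not unique, but the space of such splittings is an affine bundle over $X$ and hence contractible, so any two choices yield homotopic and in fact isomorphic almost complex structures $J_N$; this is all that will be needed in the application to Proposition \ref{caliber}. A slicker presentation is to build $J_N$ from above: define an endomorphism $\tilde J$ of $T_M|_X \oplus T_M|_X$ that is $J_X \oplus J_X$ on $df(T_X) \oplus df(T_X)$ and the swap on $E \oplus E$, observe that $\tilde J$ preserves the diagonal subbundle $T_X^\Delta = \{(df(v), df(v)) : v \in T_X\}$ (because $J_X \oplus J_X$ preserves diagonals of $T_X \oplus T_X$), and descend $\tilde J$ to the quotient $N_{X/(M\times M)} = (T_M|_X \oplus T_M|_X)/T_X^\Delta$.
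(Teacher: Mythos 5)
Your proof is correct and follows essentially the same route as the paper: the splitting $N_{X/(M\times M)} \simeq N_{X/M} \oplus N_{X/M} \oplus T_X$ via $T_M \simeq N_{M/(M\times M)}$, the swap structure $(\zeta,\eta)\mapsto(-\eta,\zeta)$ on the two copies of $N_{X/M}$, and $\pm J_X$ on the $T_X$ factor. The only difference is that the paper takes $-J_X$ rather than $J_X$ on that factor, a sign choice it justifies immediately after the lemma purely as a convenience (so that $\Sigma'_x$ and $\Sigma''_x$ do not acquire repeated factors of $T^{1,0}_{X,x}$ and $T^{0,1}_{X,x}$), so your choice does not affect the validity of the lemma.
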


\begin{proof}
Since $T_M \simeq N_{M/{M \times M}},$ we obtain the following real analytic splittings \[N_{X/{M \times M}} \simeq N_{X/M} \oplus N_{M/{M \times M}}|_X \simeq N_{X/M} \oplus T_{M}|_X \simeq N_{X/M} \oplus N_{X/M} \oplus T_X.\]  
 
Let $J_{N_{X/M} \oplus N_{X/M}}$ be the tautological complex structure that is given by $J_{N_{X/M} \oplus N_{X/M}}(\zeta,\eta)=(-\eta,\zeta).$ Put $J_N:=J_{N_{X/M} \oplus N_{X/M}} \oplus (-J_X),$ which defines a complex structure on $N_{X/{M \times M}}.$
\end{proof}

If we take $Y^{\mathbb{R}}=M \times M$ as in the lemma, we then have that
\[\Sigma'_x=T^{1,0}_{X,x} \oplus \{(u,-iu) \mid u \in N^{\mathbb{C}}_{X/M,x}\} \oplus T^{0,1}_{X,x},\] \[\Sigma''_x=T^{0,1}_{X,x} \oplus \{(u,iu) \mid u \in N^{\mathbb{C}}_{X/M,x}\} \oplus T^{1,0}_{X,x},\] \[S'_x=\{0\} \oplus \{(u,-iu) \mid u \in N^{\mathbb{C}}_{X/M,x}\} \oplus T^{0,1}_{X,x}, \mbox{ and}\] \[S''_x=\{0\} \oplus \{(u,iu) \mid u \in N^{\mathbb{C}}_{X/M,x}\} \oplus T^{1,0}_{X,x}.\] Here we can see why it is nicer to take $-J_X$ in the definition of $J_N,$ as otherwise $\Sigma'_x$ and $\Sigma''_x$ would have repeated direct sum factors of $T^{1,0}_{X,x}$ and $T^{0,1}_{X,x}.$ 

\begin{corollary}\label{corona}
If $n \geq 1$ and $k \geq 2n,$ any compact almost complex $n$-dimensional manifold $(X,J_X)$ admits a universal embedding $F:(X,J_X) \hookrightarrow (\mathcal{Z}_n(Y),\mathbf{D}_{n,k}),$ where $\dim_{\mathbb{R}}(Y^{\mathbb{R}})=2k.$  
\end{corollary}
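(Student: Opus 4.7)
The plan is to assemble the corollary from three ingredients: Whitney's embedding theorem, Lemma \ref{diagonal}, and Proposition \ref{caliber}. The bound $k \geq 2n$ is precisely the Whitney bound that lets a compact smooth $n$-manifold be embedded into $\mathbb{R}^k$, and the rest of the argument is simply chaining the preceding statements together.

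First I would take $M := \mathbb{R}^k$, viewed as a real analytic manifold of dimension $k$. By the strong Whitney embedding theorem, since $X$ is a compact smooth manifold of dimension $n$ and $k \geq 2n$, there is a $C^{\infty}$ embedding $f : X \hookrightarrow M$. Using $n \geq 1$, the inequality $k \geq 2n$ gives $k > n$, so the hypothesis of Lemma \ref{diagonal} is satisfied. Setting $Y^{\mathbb{R}} := M \times M = \mathbb{R}^{2k}$ and composing $f$ with the diagonal $i_{\Delta} : M \hookrightarrow M \times M$ produces a $C^{\infty}$ embedding $i_{\Delta} \circ f : X \hookrightarrow Y^{\mathbb{R}}$; Lemma \ref{diagonal} then endows the normal bundle $N_{X/{Y^{\mathbb{R}}}}$ with a natural complex structure $J_N$.

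Finally, noting that $Y^{\mathbb{R}} = \mathbb{R}^{2k}$ has complexification $Y = \mathbb{C}^{2k}$, the hypotheses of Proposition \ref{caliber} are now satisfied with this choice of $Y^{\mathbb{R}}$, embedding $i_{\Delta} \circ f$, and normal complex structure $J_N$. The proposition then yields the desired totally real embedding $F : (X, J_X) \hookrightarrow (\mathcal{Z}_n(Y), \mathbf{D}_{n,k})$ that is transverse to $\mathbf{D}_{n,k}$ and induces the almost complex structure $J_X$. Since every step is either an application of a previously established result or a routine dimension check, no serious obstacle arises; the corollary is essentially a unification of Proposition \ref{caliber} and Lemma \ref{diagonal}, with Whitney's theorem furnishing the initial embedding into $\mathbb{R}^k$.
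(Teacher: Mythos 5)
Your overall strategy (Whitney embedding, then Lemma \ref{diagonal}, then Proposition \ref{caliber}) is the intended one, but there is a genuine dimension error at the very first step. In this paper $n$ is the \emph{almost complex} dimension of $X$, so $\dim_{\mathbb{R}}(X)=2n$: this is forced by the computation $\dim_{\mathbb{C}}(S'_x)=\tfrac{1}{2}\dim_{\mathbb{C}}(N^{\mathbb{C}}_{X/Y^{\mathbb{R}},x})=\tfrac{1}{2}(2k-2n)$ in the proof of Proposition \ref{caliber}, and is made explicit by the notation $\psi:X^{2n}\hookrightarrow\mathbb{R}^{4n}$ in the paper's own proof of the corollary. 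The strong Whitney theorem for a compact manifold of real dimension $2n$ requires a target of dimension at least $4n$, so the hypothesis $k\geq 2n$ does \emph{not} give you an embedding $f:X\hookrightarrow M=\mathbb{R}^{k}$. In the extreme case $k=2n$ this step is not merely unproved but false: no closed $2n$-manifold embeds in $\mathbb{R}^{2n}$. The bound $k\geq 2n$ is calibrated instead so that $\dim_{\mathbb{R}}(Y^{\mathbb{R}})=2k\geq 4n$, i.e.\ so that Whitney applies to an embedding of $X$ into $Y^{\mathbb{R}}=M\times M$; this is exactly how the paper uses it (first $\psi:X^{2n}\hookrightarrow\mathbb{R}^{4n}$, then a map of $\mathbb{R}^{4n}$ into $Y^{\mathbb{R}}$).

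As written, your argument therefore proves the corollary only under the stronger hypothesis $k\geq 4n$, where $X$ really does embed into $M$ itself and Lemma \ref{diagonal} applies verbatim. To cover the full range $2n\leq k<4n$ you must place $X$ directly into the $2k$-dimensional space $Y^{\mathbb{R}}=M\times M$ (not into a single factor $M$), and then you still owe an argument that $N_{X/Y^{\mathbb{R}}}$ carries a complex structure, since the embedding is no longer of the diagonal form $i_{\Delta}\circ f$ covered by Lemma \ref{diagonal}; this is the point the discussion between Lemma \ref{diagonal} and the corollary is meant to address, and it is the part of your write-up that needs to be repaired rather than merely re-chained.
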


\begin{proof}
Take any real analytic manifold $M$ of dimension $k$ and put $Y^{\mathbb{R}}=M \times M.$ Let $\psi:X^{2n} \hookrightarrow \mathbb{R}^{4n}$ be a Whitney embedding and $\phi:\mathbb{R}^{4n} \rightarrow B$ be a diffeomorphism of $\mathbb{R}^{4n}$ with some open ball $B \subset Y^{\mathbb{R}}.$ Then, $\phi \circ \psi: X \hookrightarrow Y^{\mathbb{R}}$ is a real analytic embedding. The proposition builds $F:X \hookrightarrow (\mathcal{Z}_n(Y),\mathbf{D}_{n,k})$ out of the embedding $\phi \circ \psi.$
\end{proof}

We call an embedding $F$ as in Corollary \ref{corona}
a \emph{universal embedding} and denote it by $F:(X,J_X) \hookrightarrow (\mathcal{Z}_n(Y),\mathbf{D}_{n,k}).$ 

\begin{remark}\label{sphere6}
Consider the $6$-dimensional sphere $S^6$ equipped with the octonion almost complex structure $J_{\mathbb{O}}.$ Proposition \ref{caliber} manufactures a universal embedding $F:(S^6,J_{\mathbb{O}}) \hookrightarrow (\mathcal{Z}_3 (\mathbb{C}^8),\mathcal{D}_{3,4})$ from the inclusion mapping $i:S^6 \hookrightarrow Im(\mathbb{O}) \subset \mathbb{R}^8,$ where $Im(\mathbb{O})\simeq \mathbb{R}^7$ are the imaginary octonions.
\end{remark}

\subsection{Functorial property with respect to \'{e}tale morphisms}
For each $n \in \mathbb{N},$ let $\mathbb{C}\Man^n_{Et}$ be the category of complex manifolds of dimension $n$ whose morphisms are \'{e}tale morphisms. For each $k \geq n \geq 1,$ we have functors $\mathbf{Z}_{n}: \mathbb{C}\Man^{2k}_{Et} \rightarrow \mathbb{C}\Man^{N_{n,k}}_{Et},$ given by $\mathbf{Z}_n(Y)=\mathcal{Z}_n(Y),$ and for any {\'e}tale morphism $f:Y \rightarrow Y',$ $\mathbf{Z}_n(f):\mathcal{Z}_n(Y) \rightarrow \mathcal{Z}_{n}(Y')$ is the map that is defined at each $w=(y,S',S'',\Sigma',\Sigma'') \in \mathcal{Z}_n(Y)$ by \[\mathbf{Z}_n(f)(w)=\big(f(y), df|_y(S'), df|_y(S''), df|_y(\Sigma'), df|_y(\Sigma'')\big),\] where $df|_y(S'):=\im(df(y)|_{S'}),$ $df|_y(S''):=\im(df(y)|_{S''}),$ and so forth. Throughout, we will write $\mathcal{Z}_n(f)$ instead of $\mathbf{Z}_n(f).$

For a fixed $n,$ \emph{the universal embedding space associated with} $Y$ is a complex directed manifold $(\mathcal{Z}_n(Y),\mathbf{D}_{n,k})$ such that every compact almost complex $n$-dimensional manifold $(X,J_X)$ admits a totally real, transverse to $\mathbf{D}_{n,k},$ $J_X$-inducing embedding $F:X \hookrightarrow \mathcal{Z}_n(Y),$ and it satisfies the following universal property. For any $n$-dimensional compact almost complex manifold $(X',J_{X'}),$ any $C^{\infty}$ real embeddings $g:X \hookrightarrow Y^{\mathbb{R}}$ and $g':X' \hookrightarrow {Y'}^{\mathbb{R}},$ any pseudo-holomorphic \'{e}tale map $\psi_X:(X,J_X)\rightarrow (X',J_{X'}),$ and any \'{e}tale morphism $\psi_Y:Y^{\mathbb{R}} \rightarrow Y'^{\mathbb{R}}$ that fit into a commutative diagram
\[
\begin{tikzcd}
  (X,J_X) \arrow[r, hookrightarrow, "g"] \arrow[d,"\psi_X"]
    & Y^{\mathbb{R}} \arrow[d, "\psi_Y"] \\
  (X',J_{X'}) \arrow[r, hookrightarrow, "g'"]
&Y'^{\mathbb{R}} , \end{tikzcd}\]
there is a corresponding functorially defined morphism $\mathcal{Z}_n(\psi_Y):(\mathcal{Z}_{n}(Y),\mathbf{D}_{n,k}) \rightarrow (\mathcal{Z}_{n}(Y'),\mathbf{D}'_{n,k})$ of complex directed manifolds making the diagram 
\[
\begin{tikzcd}
  (X,J_X) \arrow[r, hookrightarrow, "F"] \arrow[d,"\psi_X"]
    &\mathcal{Z}_{n}(Y) \arrow[d, "\exists \mathcal{Z}_n(\psi_Y)", dashed] \\
 (X',J_{X'}) \arrow[r, hookrightarrow, "F'"]
&\mathcal{Z}_{n}(Y'), \end{tikzcd}\]
commute. By the Nash-Tognoli Theorem, we may assume that $X$ and $X'$ are smooth real algebraic varieties and that $g:X \hookrightarrow Y^{\mathbb{R}}$ and $g':X'\hookrightarrow Y'^{\mathbb{R}}$ are algebraic. Then, our construction gives that $F$ and $F'$ are real algebraic as well.

\section{The geometry of $\mathcal{Z}_n(Y)$ and related bundles}

\subsection{Coordinates on $\mathcal{Z}_n(Y)$}

Let $U_y \simeq_{\psi} \mathbb{C}^{2k}$ be any holomorphic coordinate chart that is centered at a given point $y \in Y.$ Let $p \in U_y.$ We write $\psi(p)=(x_1,\dots,x_{2k})$ for the holomorphic coordinates of the point $p.$ Let $S'_p=\Span_{\mathbb{C}}(\frac{\partial}{\partial x_j}\big|_p)_{j=n+1}^k,$ $S''_p=\Span_{\mathbb{C}}(\frac{\partial}{\partial x_j}\big|_p)_{j=n+k+1}^{2k},$ $\Sigma'_p=\Span_{\mathbb{C}}(\frac{\partial}{\partial x_j}\big|_p)_{j=1}^{k},$ $\Sigma''_p=\Span_{\mathbb{C}}(\frac{\partial}{\partial x_j}\big|_p)_{j=k+1}^{2k},$ and put  $f_p:=(S'_p,S''_p,\Sigma'_p,\Sigma''_p).$ From this point on, we will omit reference to the point $p$ in the vector $\frac{\partial}{\partial x_j}\big|_p,$ and denote it simply by $\frac{\partial}{\partial x_j}.$ We develop a coordinate chart $U_y \times \mathcal{A}(f_y) \simeq \mathbb{C}^{2k} \times \mathbb{C}^{N_{n,k}-2k}=\mathbb{C}^{N_{n,k}}$ that is centered at $w_y:=(y,f_y).$ To that end, let $E_{S'_p}=\Span_{\mathbb{C}}(\frac{\partial}{\partial x_j})_{j=1}^n$ and $E_{S''_p}=\Span_{\mathbb{C}}(\frac{\partial}{\partial x_j})_{j=k+1}^{n+k}.$ In addition to $\Sigma'_p \oplus \Sigma''_p = \T_{Y,p},$ there are direct sum decompositions $S'_p \oplus E_{S'_p}=\Sigma'_p$ and $S''_p \oplus E_{S''_p}=\Sigma''_p.$ Define 

\begin{equation*}
\begin{split}
\mathcal{A}(f_y):= & \Big\{(S',S'',\Sigma', \Sigma'') \in Q_p \mid p \in U_y, S' \cap E_{S'_p}=\{0\}, S'' \cap E_{S''_p}=\{0\},\\
& \Sigma' \cap \Sigma''_p=\{0\}, \Sigma'' \cap \Sigma'_p=\{0\} \Big\}.
\end{split}
\end{equation*}

For any $(S',S'',\Sigma', \Sigma'') \in \mathcal{A}(f_y),$ $S',$ $S'',$ $\Sigma'$ and $\Sigma''$ correspond uniquely to maps $f_{S'} \in \Hom(S'_p,E_{S'_p}), f_{S''} \in \Hom(S''_p,E_{S''_p}), f_{\Sigma'} \in \Hom(\Sigma'_p, \Sigma''_p)$ and $f_{\Sigma''} \in \Hom(\Sigma''_p,\Sigma'_p),$ respectively, in the sense that $S'=\Gamma(f_{S'}),$ $S''=\Gamma(f_{S''}),$ $\Sigma'=\Gamma(f_{\Sigma'}),$ and $\Sigma''=\Gamma(f_{\Sigma''}),$ where $\Gamma(g)$ denotes the graph of the function $g;$ e.g. $\Gamma(f_{S'})=\{x + f_{S'}(x) | x \in S'_p\}.$ Suppose that

\[f_{S'}(\frac{\partial}{\partial x_j})=\sum_{i=1}^n z_{ij}\frac{\partial}{\partial x_i}, \mbox{ for }n+1 \leq j \leq k,\]

\[f_{S''}(\frac{\partial}{\partial x_j})=\sum_{i=k+1}^{n+k} z_{ij}\frac{\partial}{\partial x_i}, \mbox{ for }n+k+1 \leq j \leq 2k,\]

\[f_{\Sigma'}(\frac{\partial}{\partial x_j})=\sum_{i=k+1}^{2k} z_{ij}\frac{\partial}{\partial x_i} \mbox{ for }1 \leq j \leq k, \mbox{ and }\]

\[f_{\Sigma''}(\frac{\partial}{\partial x_j})=\sum_{i=1}^k z_{ij}\frac{\partial}{\partial x_i},\mbox{for }k+1 \leq j \leq 2k.\]

Then, 

\[S'=\Span_{\mathbb{C}}\big(\frac{\partial}{\partial x_j}+\sum_{i=1}^n z_{ij}\frac{\partial}{\partial x_i}\big)_{j=n+1}^k, S''=\Span_{\mathbb{C}}\big(\frac{\partial}{\partial x_j} + \sum_{i=k+1}^{n+k} z_{ij}\frac{\partial}{\partial x_i}\big)_{j=n+k+1}^{2k},\] 

\[\Sigma'=\Span_{\mathbb{C}}\big(\frac{\partial}{\partial x_j} +\sum_{i=k+1}^{2k} z_{ij}\frac{\partial}{\partial x_i}\big)_{j=1}^k \mbox{ and }\Sigma''=\Span_{\mathbb{C}}\big(\frac{\partial}{\partial x_j} + \sum_{i=1}^k z_{ij}\frac{\partial}{\partial x_i}\big)_{j=k+1}^{2k}.\]

This defines a coordinate map $q:\mathcal{A}(f_y) \rightarrow \mathbb{C}^{2(k^2+n(k-n))}=\mathbb{C}^{N_{n,k}-2k},$ \[q(S',S'',\Sigma',\Sigma'')=Z:=\left(
\begin{array}{c|c}
Z_{S'} & Z_{\Sigma''} \\
\hline
Z_{\Sigma'} & Z_{S''}
\end{array}
\right),\] where \[Z_{S'}=\left(
\begin{array}{ccc}
\mathbf{0_{n \times n}} & \vline & z_{ij} \\
\hline 
&\mathbf{0_{(k-n) \times k}}&
\end{array}
\right)_{1 \leq i \leq n, n+1 \leq j \leq k},\] 

\[Z_{S''}=\left(
\begin{array}{ccc}
\mathbf{0_{n \times n}} & \vline & z_{ij} \\
\hline 
&\mathbf{0_{(k-n) \times k}}&
\end{array}
\right)_{k+1 \leq i  \leq n+k , n+k+1 \leq j \leq 2k},\] 

\[Z_{\Sigma'}=\left(
\begin{array}{c}
z_{ij}
\end{array}
\right)_{k+1 \leq i \leq 2k, 1 \leq j \leq k},\] and 

\[Z_{\Sigma''}=\left(
\begin{array}{c}
z_{ij}
\end{array}
\right)_{1 \leq i \leq k, k+1 \leq j \leq 2k},\] and note that $q(f_y)=0.$

Coordinates centered at $w_y$ are then given by the map $\phi:=\psi \times q:U_y \times \mathcal{A}(f_y) \rightarrow \mathbb{C}^{N_{n,k}},$ $\phi(y,S',S'',\Sigma',\Sigma'')=(\psi(y), q(S',S'',\Sigma',\Sigma''))=(\psi(y),Z),$ and note that $\phi(w_y)=0.$     

\subsection{Sub-bundles of the Grassmannian bundle}

The torsion operator is the section \[\mathbf{\theta} \in H^0\big(\mathcal{Z}_n(Y), \mathcal{O}(\Lambda^2(\mathbf{D}^*_{n,k}) \otimes T_{\mathcal{Z}_n(Y)}/\mathbf{D}_{n,k})\big),\] where \[\mathbf{\theta}(w): \mathbf{D}_{n,k,w} \times \mathbf{D}_{n,k,w} \rightarrow T_{\mathcal{Z}_n(Y),w}/\mathbf{D}_{n,k,w},\] $\theta(w)(\zeta,\eta)=[\zeta,\eta] \mod{\mathbf{D}_{n,k,w}}.$ At the central point, we have the coordinate form

\begin{lemma}\label{torsion}
\[\theta(w_y)=-2 \sum_{j=n+1}^{2k} \sum_{i=1}^n dx_j \wedge dz_{ij} \otimes \frac{\partial}{\partial x_i}.\]
\end{lemma}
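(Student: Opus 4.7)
The plan is to compute $\theta(w_y)(\zeta,\eta)=[\tilde\zeta,\tilde\eta](w_y)\mod \mathbf{D}_{n,k,w_y}$ for all pairs of basis vectors of $\mathbf{D}_{n,k,w_y},$ choosing extensions $\tilde\zeta,\tilde\eta$ to sections of $\mathbf{D}_{n,k}$ that make the Lie brackets transparent. Since $\mathbf{\Delta}_{n,k,w_y}=S'_y\oplus\Sigma''_y=\Span_{\mathbb{C}}(\partial/\partial x_j)_{j=n+1}^{2k},$ a basis of $\mathbf{D}_{n,k,w_y}$ is given by these $2k-n$ horizontal vectors together with the vertical coordinate fields $\partial/\partial z_{pq}$ associated to $\mathcal{A}(f_y);$ the classes of $\partial/\partial x_1,\dots,\partial/\partial x_n$ give a basis of the quotient $T_{\mathcal{Z}_n(Y),w_y}/\mathbf{D}_{n,k,w_y}.$

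For the extensions, the vertical fields $\partial/\partial z_{pq}$ are already sections of $\mathbf{D}_{n,k}$ everywhere (they lie in $\ker d\pi_Y$). For the horizontal directions, the coordinate description of Section 2.1 supplies ready-made lifts: at a nearby point $w,$ $S'_w$ is spanned by $V_j(w):=\partial/\partial x_j+\sum_{i=1}^n z_{ij}\partial/\partial x_i$ for $n+1\le j\le k,$ and $\Sigma''_w$ by $W_j(w):=\partial/\partial x_j+\sum_{i=1}^k z_{ij}\partial/\partial x_i$ for $k+1\le j\le 2k.$ Reading these as vector fields on $U_y\times\mathcal{A}(f_y),$ one obtains sections of $\mathbf{D}_{n,k}$ with $V_j(w_y)=W_j(w_y)=\partial/\partial x_j.$

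The bracket computation is now essentially immediate. The coefficients of $V_j$ and $W_j$ depend only on the $z$-variables, so applying $[X,Y]^c=X(Y^c)-Y(X^c)$ yields $[V_j,V_{j'}]=[V_j,W_{j'}]=[W_j,W_{j'}]=[\partial/\partial z_{ab},\partial/\partial z_{a'b'}]=0,$ since every relevant derivative vanishes trivially. The only nontrivial brackets arise from a vertical field differentiating a matching $z$-coefficient, giving $[\partial/\partial z_{ij},V_j]=\partial/\partial x_i$ for $1\le i\le n,\ n+1\le j\le k$ and $[\partial/\partial z_{ij},W_j]=\partial/\partial x_i$ for $1\le i\le k,\ k+1\le j\le 2k,$ while $[\partial/\partial z_{ij},V_{j'}]$ and $[\partial/\partial z_{ij},W_{j'}]$ vanish whenever $j\ne j'.$

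Finally, reduce modulo $\mathbf{D}_{n,k,w_y}.$ Classes of $\partial/\partial x_i$ with $n+1\le i\le 2k$ already lie in $\mathbf{\Delta}_{n,k,w_y}$ and thus vanish, so only the range $1\le i\le n$ survives in the $W_j$-contributions; this is exactly why the sum over $i$ in the displayed formula truncates at $n$ even though $Z_{\Sigma''}$ carries entries with $i$ up to $k.$ Packaging the surviving brackets against the dual basis $\{dx_j,dz_{pq}\}$ in $\Lambda^2\mathbf{D}^*_{n,k}\otimes T_{\mathcal{Z}_n(Y)}/\mathbf{D}_{n,k}$ reproduces the stated expression, the overall factor $-2$ reflecting the wedge-product convention in use. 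The only delicate point in the whole argument is verifying that $V_j$ and $W_j$ really are sections of $\mathbf{D}_{n,k}$ throughout a neighborhood of $w_y,$ not merely at the center; once that is in hand, the $x$-independence of their coefficients makes everything else automatic.
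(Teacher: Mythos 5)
Your proposal is correct and follows essentially the same route as the paper: the paper also works in the coordinates of Section 2.1, extends the horizontal basis vectors of $\mathbf{D}_{n,k,w_y}$ to the local frame $\partial/\partial x_j+\sum_i a_{ij}(w)\,\partial/\partial x_i$ of $S'\oplus\Sigma''$ (your $V_j,W_j$, written uniformly via an indicator function), uses the $x$-independence of the coefficients to kill the horizontal--horizontal brackets, and obtains the factor $-2$ by summing the two orderings $dx_j\wedge dz_{ij}$ and $dz_{ij}\wedge dx_j$. The only differences are notational.
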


\begin{proof}
Let $\mathcal{I}:=\{1,\dots,n\} \times \{n+1,\dots,k\} \cup \{k+1,\dots,n+k\} \times \{n+k+1,\dots,2k\} \cup \{k+1,\dots,2k\} \times \{1,\dots,k\} \cup \{1,\dots,k\} \times\{k+1,\dots,2k\}$ and define 

\begin{equation*}
\begin{split}
(\mathcal{I}):= & \big((1,n+1),\dots,(1,k),\dots,(n,n+1),\dots,(n,k),(k+1,n+k+1),\dots,(k+1,2k),\\
& \dots,(n+k,n+k+1),\dots,(n+k,2k), (k+1,1)\dots,(k+1,k),\dots,(2k,1),\\
&\dots,(2k,k),(1,k+1),\dots,(1,2k),\dots,(k,k+1),\dots,(k,2k)\big).
\end{split}
\end{equation*} 

Here we work with the less compact, but equivalent coordinates $\phi(p,S',S'',\Sigma',\Sigma'')=(x, (z_{ij})_{(i,j) \in (\mathcal{I})}).$ Let $w=(p,S',S'',\Sigma', \Sigma'') \in U_y \times \mathcal{A}(f_y)$ so that 

\[\Big(\Big(\frac{\partial}{\partial x_l} \Big)_{l=1}^{2k}, \Big(\frac{\partial}{\partial z_{ij}}\Big)_{(i,j) \in (\mathcal{I})}\Big)\] is a basis of the tangent space $T_{\mathcal{Z}_{n}(Y),w}.$ Then,

\[\mathbf{D}_{n,k,w_y}=\Big\{(\zeta, u',u'',v',v'') \in T_{\mathcal{Z}_{n}(Y),w_y} \mid \zeta \in S'_y \oplus \Sigma''_y=\Span_{\mathbb{C}}\Big(\frac{\partial}{\partial x_l}\Big)_{l=n+1,\dots,2k}\Big\} \mbox{ and }\]

\[\mathbf{D}_{n,k,w}=\{(\zeta, u',u'',v',v'') \in T_{\mathcal{Z}_{n}(Y),w} \mid \zeta \in S' \oplus \Sigma''\}.\] 

Let $a:U_y \times \mathcal{A}(f_y) \rightarrow \M_{n \times (N_{n,k}-n)}(\mathbb{C})$ be the function $a(w)=(\chi_{\mathcal{I}} z_{ij}),$ where 

\[\chi_{\mathcal{I}}=\left\{
\begin{array}{ll}
      1 &\mbox{ if } (i,j) \in \mathcal{I} \\
      0 & \mbox{ otherwise.} \\
\end{array} 
\right. \]

Then, since $\chi_{\mathcal{I}}=0$ on $\{n+1,\dots,k\} \times \{n+1,\dots,k\},$

\[S' \oplus \Sigma''=\Span_{\mathbb{C}}\Big(\frac{\partial}{\partial x_j} + \sum_{i=1}^k a_{ij}(w) \frac{\partial}{\partial x_i} \Big)_{j=n+1,\dots,2k}.\] 

Therefore,

\[\mathbf{D}_{n,k,w_y} =\Span_{\mathbb{C}}\Big(\frac{\partial}{\partial x_l}\Big)_{l=n+1,\dots,2k}\oplus \Span_{\mathbb{C}} \Big(\frac{\partial}{\partial z_{ij}}\Big)_{(i,j) \in (\mathcal{I})},\]

\[\mathbf{D}_{n,k,w} =\Span_{\mathbb{C}}\Big(\frac{\partial}{\partial x_j} + \sum_{i=1}^k a_{ij}(w) \frac{\partial}{\partial x_i} \Big)_{j=n+1,\dots,2k} \oplus \Span_{\mathbb{C}}\Big(\frac{\partial}{\partial z_{ij}}\Big)_{(i,j) \in (\mathcal{I})},\]

\[\mbox{ and so }T_{\mathcal{Z}_{n}(Y),w_y}/\mathbf{D}_{n,k,w_y} \simeq \Span_{\mathbb{C}} \Big(\frac{\partial}{\partial x_l}\Big)_{l=1,\dots,n}.\]

Note that $a(w_y)=0.$ At $w_y,$ 

\begin{equation*}
\begin{split}
\Big[\frac{\partial}{\partial x_j} + \sum_{i=1}^k a_{ij} \frac{\partial}{\partial x_i},\frac{\partial}{\partial x_l} + \sum_{i=1}^k a_{il} \frac{\partial}{\partial x_i}  \Big] &=\sum_{i=1}^k\Big(\frac{\partial a_{il}}{\partial x_j} - \frac{\partial a_{ij}}{\partial x_l}\Big) \frac{\partial}{\partial x_i} \\
&=0
\end{split}
\end{equation*} 

because the function $a$ is independent of $x_i$ for all $i,$ and 

\begin{equation*}
\begin{split}
\theta(w_y)\Big(\frac{\partial}{\partial x_j} + \sum_{i=1}^k a_{ij} \frac{\partial}{\partial x_i}, \frac{\partial}{\partial z_{ml}} \Big) &=\sum_{i=1}^k \theta(w_y)\Big(a_{ij} \frac{\partial}{\partial x_i}, \frac{\partial}{\partial z_{ml}} \Big)\\
&=\sum_{i=1}^k \Big(-\frac{\partial a_{ij}}{\partial z_{ml}} \frac{\partial}{\partial x_i} \mod{\mathbf{D}_{n,k,w_y}}\Big) \\
&=\sum_{i=1}^n -\frac{\partial a_{ij}}{\partial z_{ml}}  \frac{\partial}{\partial x_i}.
\end{split}
\end{equation*} 

Then since, $1 \leq i \leq n,$ $n+1 \leq j \leq 2k,$ and $(m,l) \in \mathcal{I},$ the index domains (i.e.\ the domains of $i,j,l$ and $m$) overlap exactly at $\{1,\dots,n\} \times \{n+1,\dots,2k\} \cap \mathcal{I}=\{1,\dots,n\} \times \{n+1,\dots,2k\}.$ Therefore,  

\begin{equation*}
\begin{split}
\theta(w_y)&=\sum_{j,l=n+1}^{2k} \sum_{m=1}^n \theta(w_y)\Big(\frac{\partial}{\partial x_j} + \sum_{i=1}^k a_{ij} \frac{\partial}{\partial x_i}, \frac{\partial}{\partial z_{ml}} \Big) dx_j \wedge dz_{ml} + \\
&\sum_{j,l=n+1}^{2k} \sum_{m=1}^n \theta(w_y)\Big(\frac{\partial}{\partial z_{ml}}, \frac{\partial}{\partial x_j} + \sum_{i=1}^k a_{ij} \frac{\partial}{\partial x_i}\Big)dz_{ml} \wedge dx_j \\
&=\sum_{j,l=n+1}^{2k} \sum_{m=1}^n \Big( \sum_{i=1}^n -\frac{\partial a_{ij}}{\partial z_{ml}} \frac{\partial}{\partial x_i} \Big) dx_j \wedge dz_{ml} + \\
&\sum_{j,l=n+1}^{2k} \sum_{m=1}^n \Big(\sum_{i=1}^n \frac{\partial a_{ij}}{\partial z_{ml}} \frac{\partial}{\partial x_i} \Big)dz_{ml} \wedge dx_j \\
&=-2\sum_{j,l=n+1}^{2k} \sum_{i,m=1}^n \frac{\partial a_{ij}}{\partial z_{ml}} dx_j \wedge dz_{ml} \otimes \frac{\partial}{\partial x_i} \\
&=-2 \sum_{j=n+1}^{2k} \sum_{i=1}^n dx_j \wedge dz_{ij} \otimes \frac{\partial}{\partial x_i}.
\end{split}
\end{equation*} 

\end{proof}

\begin{remark}\label{the}
For any $X=\sum_{l=n+1}^{2k} X_l \frac{\partial}{\partial x_l} + \sum_{(i,j) \in (\mathcal{I})} X_{ij} \frac{\partial}{\partial z_{ij}}, Y=\sum_{l=n+1}^{2k} Y_l \frac{\partial}{\partial x_l} + \sum_{(i,j) \in (\mathcal{I})} Y_{ij} \frac{\partial}{\partial z_{ij}} \in \mathbf{D}_{n,k,w_y},$

\[\theta(w_y)(X,Y)=-2\sum_{i=1}^n \sum_{j=n+1}^{2k} (X_j Y_{ij}-Y_jX_{ij})\frac{\partial}{\partial x_i}.\]
\end{remark}

Recall the projection mapping $\pi_Y:\mathcal{Z}_n(Y) \rightarrow Y,$ $\pi_Y(y,q_y)=y.$ If $Y=\mathbb{C}^{2k},$ we simply denote this map by $\pi.$ Consider the Grassmannian bundle $\mathbf{\Gr}^{\mathbb{C}}(\mathbf{D}_{n,k},n)$ on $\mathcal{Z}_n(Y),$ whose fiber at $w \in \mathcal{Z}_n(Y)$ is the Grassmannian $\mathbf{\Gr}^{\mathbb{C}}(\mathbf{D}_{n,k,w},n).$ For any $w \in \mathcal{Z}_n(Y),$ define \[\Gro^\mathrm{o}_{n,k,w}:=\{S \in \Gr^{\mathbb{C}}(\mathbf{D}_{n,k,w},n) \mid d\pi_Y(w)|_S \mbox{ is injective}\}\] and \[\mathbf{I}_{n,k,w}:=\{S \in \Gr^{\mathbb{C}}(\mathbf{D}_{n,k,w},n) \mid \mathbf{\theta}(w)|_{S \times S}=0\}.\] Let $\Gro^\mathrm{o}_{n,k}$ be the sub-bundle of the Grassmannian bundle with fiber $\Gro^\mathrm{o}_{n,k,w}$ at $w,$ and define $\mathbf{I}_{n,k}$ similarly. Now let $\mathbf{I}^\mathrm{o}_{n,k}$ be the sub-bundle whose fiber over $w$ is $\mathbf{I}^\mathrm{o}_{n,k,w}:=\Gro^\mathrm{o}_{n,k,w} \cap \mathbf{I}_{n,k,w}.$ When $Y=\mathbb{C}^{2k},$ we denote these bundles by $\Gr^\mathrm{o}_{n,k}, \mathcal{I}_{n,k},$ and $\mathcal{I}^\mathrm{o}_{n,k}.$ 

The refinement of the strategy that was mentioned in the introduction is now obtained by replacing the Grassmannian bundle and the isotropic locus $\mathcal{I}_{n,k}$ with the Zariski open $\Gr^\mathrm{o}_{n,k} \subset \mathbf{\Gr}^{\mathbb{C}}(\mathcal{D}_{n,k},n)$ and with $\mathcal{I}^\mathrm{o}_{n,k},$ respectively. Indeed, such a replacement makes sense because if $F:(X,J_X) \hookrightarrow \mathcal{Z}_n(\mathbb{C}^{2k})^{\mathbb{R}}$ is a universal embedding, the composition $d\pi|_{F(x)} \circ \bar{\partial}_{J_X} F|_x$ must be injective. So $\im(\bar{\partial}_{J_X} F) \subset \Gr^\mathrm{o}_{n,k},$ and therefore if $J_X$ is integrable, $\im(\bar{\partial}_{J_X} F) \subset \mathcal{I}^\mathrm{o}_{n,k}.$ So we are now interested in the topology of $\mathcal{I}^\mathrm{o}_{n,k}$ relative to $\Gr^\mathrm{o}_{n,k}.$ Note that if we start with a universal embedding $F:(X,J_X) \hookrightarrow \mathcal{Z}_n(Y),$ we can state the strategy in terms of $\Gro^\mathrm{o}_{n,k}$ and $\mathbf{I}^\mathrm{o}_{n,k}.$ In that case, the homotopy of interest would be a map $\im(\bar{\partial}_{J_t} F_t (x)):[0,1] \times X \rightarrow \Gro^\mathrm{o}_{n,k}.$ At the moment, we do not know if there are any advantages to working with this version of the strategy.

\subsection{A functorial group action}
This is a brief overview of basic facts that we will use in future work on understanding the geometry of $\mathcal{Z}_n(Y)$ and related bundles via group actions.

Let $(e_j)_{j=1}^{2k}$ be the standard basis of $\mathbb{C}^{2k},$ $S'_0=\Span_{\mathbb{C}}(e_j)_{j=n+1}^k, S''_0=\Span_{\mathbb{C}}(e_j)_{j=n+k+1}^{2k},$ $\Sigma'_0=\Span_{\mathbb{C}}(e_j)_{j=1}^{k},$ and $\Sigma''_0=\Span_{\mathbb{C}}(e_j)_{j=k+1}^{2k}.$ Put $f_0:=(S'_0,S''_0,\Sigma'_0,\Sigma''_0) \in F^2_{(k-n,k)}(\mathbb{C}^{2k})$ and $w_0:=(0,f_0).$ 

\begin{lemma}
The group $\GL_{2k}(\mathbb{C})$ acts transitively on $Q$ and the stabilizer $\Lambda$ of $f_0$ is the subgroup of $\GL_{2k}(\mathbb{C})$ of all matrices of the form \[\left(
\begin{array}{c|c}
B_{k \times k} & 0_{k \times k} \\
\hline
0_{k \times k} & B'_{k \times k}
\end{array}
\right),\] where \[B_{k \times k}=\left(
\begin{array}{ccc}
\mathbf{B_{n \times n}} &\vline& \mathbf{0_{n \times (k-n)}}\\
\hline
& \mathbf{B_{(k-n) \times k}}& 
\end{array}
\right) \in \GL_{k}(\mathbb{C})\] and \[B'_{k \times k}=\left(
\begin{array}{ccc}
\mathbf{B'_{n \times n}} &\vline& \mathbf{0_{n \times (k-n)}}\\
\hline
& \mathbf{B'_{(k-n) \times k}}& 
\end{array}
\right) \in \GL_{k}(\mathbb{C}).\] As a result, $Q \simeq \GL_{2k}(\mathbb{C})/\Lambda.$ 
\end{lemma}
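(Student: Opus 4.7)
The plan is to prove transitivity by an explicit change-of-basis argument, then compute the stabilizer of $f_0$ by tracking which block entries are forced to vanish when one preserves each piece of the flag pair, and finally invoke the orbit-stabilizer theorem for the last claim.

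\medskip

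First I would establish transitivity. Given any $(S',S'',\Sigma',\Sigma'') \in Q,$ since $\Sigma' \oplus \Sigma'' = \mathbb{C}^{2k}$ and $\dim_{\mathbb{C}} \Sigma' = \dim_{\mathbb{C}} \Sigma'' = k,$ I would choose an ordered basis $(v_1,\dots,v_k)$ of $\Sigma'$ whose last $k-n$ vectors $(v_{n+1},\dots,v_k)$ form a basis of $S',$ and similarly an ordered basis $(v_{k+1},\dots,v_{2k})$ of $\Sigma''$ whose last $k-n$ vectors $(v_{n+k+1},\dots,v_{2k})$ span $S''.$ The concatenation $(v_1,\dots,v_{2k})$ is a basis of $\mathbb{C}^{2k},$ so the linear map $g$ defined by $g(e_j)=v_j$ lies in $\GL_{2k}(\mathbb{C})$ and sends $f_0$ to $(S',S'',\Sigma',\Sigma''),$ as required.

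\medskip

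Next I would identify $\Lambda=\mathrm{Stab}(f_0).$ Any $g \in \Lambda$ must satisfy $g(\Sigma'_0)=\Sigma'_0$ and $g(\Sigma''_0)=\Sigma''_0;$ since $\Sigma'_0=\Span(e_1,\dots,e_k)$ and $\Sigma''_0=\Span(e_{k+1},\dots,e_{2k})$ are complementary coordinate subspaces, this forces $g$ to be block-diagonal of the form $\mathrm{diag}(B_{k \times k}, B'_{k \times k})$ with $B, B' \in \GL_k(\mathbb{C}).$ The additional constraint $g(S'_0)=S'_0,$ i.e.\ $B$ preserves $\Span(e_{n+1},\dots,e_k) \subset \Span(e_1,\dots,e_k),$ means that the columns of $B$ indexed $n+1,\dots,k$ have vanishing entries in rows $1,\dots,n.$ That is exactly the block form with a zero $n \times (k-n)$ upper-right block displayed in the statement. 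The analogous condition on $B'$ from $g(S''_0)=S''_0$ yields the same block shape for $B'.$ Conversely, any matrix of the prescribed form evidently preserves each of $S'_0, S''_0, \Sigma'_0, \Sigma''_0,$ so it belongs to $\Lambda.$

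\medskip

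Finally, transitivity of the action together with the identification of $\Lambda$ yields the bijection $Q \simeq \GL_{2k}(\mathbb{C})/\Lambda$ by the orbit-stabilizer theorem. I expect the main obstacle to be purely bookkeeping: writing out the block decomposition of $B$ in terms of the $(n,k-n)$ splitting of the indices $\{1,\dots,k\}$ and checking that the forced zero block is exactly the $n \times (k-n)$ upper-right one and not some other block. No deeper geometric input is needed, as everything reduces to linear algebra on coordinate flags.
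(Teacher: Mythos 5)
Your proof is correct and follows essentially the same route as the paper: transitivity by mapping the standard adapted basis to one adapted to an arbitrary element of $Q,$ and the stabilizer by reading off which matrix entries must vanish so that each of $S'_0, S''_0, \Sigma'_0, \Sigma''_0$ is preserved. The only difference is cosmetic — the paper asserts transitivity without writing out the basis argument and imposes all four invariance conditions on the entries of a single $2k \times 2k$ matrix at once, whereas you first block-diagonalize via $\Sigma'_0, \Sigma''_0$ and then refine; both yield the same block form for $\Lambda.$
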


\begin{proof}
For any subspace $S=\Span_{\mathbb{C}}(s_j)_{j=1}^r$ of $\mathbb{C}^{2k},$ define $GS:=\Span_{\mathbb{C}}(Gs_j)_{j=1}^r,$ for any $G \in \GL_{2k}(\mathbb{C}).$ The action of $\GL_{2k}(\mathbb{C})$ on $Q,$ given by $(G,(S',S'',\Sigma',\Sigma'')) \mapsto G(S,S'',\Sigma',\Sigma''),$ where $G(S,S'',\Sigma',\Sigma'')=(GS',GS'',G\Sigma',G\Sigma''),$ is transitive. To compute the stabilizer of $f_0,$ let $B=\begin{pmatrix}B_{ij} \end{pmatrix}_{1 \leq i,j \leq 2k} \in \GL_{2k}(\mathbb{C})$ and note that $Bf_0=f_0$ iff 
\[\mbox{for each } n+1 \leq j \leq k, B_{ij} = 0 \mbox{ for all } 1 \leq i \leq n \mbox{ and } k+1 \leq i \leq 2k,\]
\[\mbox{for each } n+k+1 \leq j \leq 2k, B_{ij} = 0 \mbox{ for all } k+1 \leq i \leq n+k,\]
\[\mbox{for each } 1 \leq j \leq k, B_{ij} = 0 \mbox{ for all } k+1 \leq i \leq 2k,\] and \[\mbox{for each } k+1 \leq j \leq 2k, B_{ij} = 0 \mbox{ for all }1 \leq i \leq k.\] Therefore, $\begin{pmatrix}B_{ij} \end{pmatrix}_{1 \leq i \leq n,n+1 \leq j \leq k}=0_{n \times (k-n)},$ $\begin{pmatrix}B_{ij} \end{pmatrix}_{k+1 \leq i \leq n+k,n+k+1 \leq j \leq 2k}=0_{n \times (k-n)},$ $\begin{pmatrix}B_{ij} \end{pmatrix}_{k+1 \leq i \leq 2k,1 \leq j \leq k}=0_{k \times k},$ and $\begin{pmatrix}B_{ij} \end{pmatrix}_{1 \leq i \leq k,k+1 \leq j \leq 2k}=0_{k \times k}$ so that $B$ is as claimed. 
\end{proof}

The group $\Aut^{\hol}(Y)$ of biholomorphisms of $Y$ acts functorially on $\mathcal{Z}_n(Y)$: \[\Aut^{\hol}(Y) \times \mathcal{Z}_n(Y) \rightarrow \mathcal{Z}_n(Y),\] \[(f,(y,S',S'',\Sigma',\Sigma'')) \mapsto \mathcal{Z}_n(f)(y,S',S'',\Sigma',\Sigma''),\] where recall $\mathcal{Z}_n(f)(y,S',S'',\Sigma',\Sigma'')= (f(y),df|_y(S',S'',\Sigma',\Sigma'')).$ Of course it can happen that $\Aut^{\hol}(Y)=\{Id_Y\}.$ Although this action is generally non-transitive, there are exceptions, for example when $Y=\mathbb{C}^{2k}.$ Consider the subgroup $\Aff(\mathbb{C}^{2k})$ of $\Aut^{\hol}(\mathbb{C}^{2k}).$ For any $f=Bz + c \in \Aff(\mathbb{C}^{2k}),$ note that \[\mathcal{Z}_n(f)(y,S',S'',\Sigma',\Sigma'')=(By+c,df|_y(S',S'',\Sigma',\Sigma''))=(By+c,B(S',S'',\Sigma',\Sigma'')).\]

\begin{lemma}\label{tt}
The functorial action of $\Aff(\mathbb{C}^{2k})$ on $\mathcal{Z}_n(\mathbb{C}^{2k})$ is transitive, the stabilizer of $w_0$ is $\mathcal{L}:=\{f=Bz \mid B \in \Lambda\},$ and $\mathcal{Z}_n(\mathbb{C}^{2k}) \simeq \Aff(\mathbb{C}^{2k}) /\mathcal{L}.$ 
\end{lemma}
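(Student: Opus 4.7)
The plan is to reduce everything to the previous lemma via the product decomposition $\mathcal{Z}_n(\mathbb{C}^{2k})=\mathbb{C}^{2k}\times Q$ and the semidirect product structure $\Aff(\mathbb{C}^{2k})=\mathbb{C}^{2k}\rtimes \GL_{2k}(\mathbb{C})$. Concretely, for any $f(z)=Bz+c$ the functorial action formula already recorded in the paper gives
\[\mathcal{Z}_n(f)(y,S',S'',\Sigma',\Sigma'')=(By+c,\,B(S',S'',\Sigma',\Sigma'')),\]
so the translation part acts only on the base point $y$, while the linear part $B$ acts simultaneously on $y$ and on the flag data exactly by the $\GL_{2k}(\mathbb{C})$-action studied in the previous lemma.

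First I would establish transitivity. Given an arbitrary target $w=(y,S',S'',\Sigma',\Sigma'')\in\mathcal{Z}_n(\mathbb{C}^{2k})$, the previous lemma supplies a $B\in\GL_{2k}(\mathbb{C})$ with $B f_0=(S',S'',\Sigma',\Sigma'')$; setting $c:=y$ and $f(z):=Bz+c$, the displayed formula applied to $w_0=(0,f_0)$ yields $\mathcal{Z}_n(f)(w_0)=(y,Bf_0)=w$, proving transitivity.

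Next I would compute the stabilizer of $w_0$. An affine map $f=Bz+c$ fixes $w_0$ iff $B\cdot 0+c=0$ and $B f_0=f_0$; the first condition forces $c=0$ and, by the previous lemma, the second condition forces $B\in\Lambda$. Hence the stabilizer equals $\mathcal{L}=\{f=Bz\mid B\in\Lambda\}$, which is precisely the copy of $\Lambda$ sitting inside $\Aff(\mathbb{C}^{2k})$ as the subgroup of linear maps with trivial translation part.

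Finally, the orbit-stabilizer theorem, applied to the smooth transitive action of the complex Lie group $\Aff(\mathbb{C}^{2k})$ on the connected complex manifold $\mathcal{Z}_n(\mathbb{C}^{2k})$, produces a biholomorphism $\Aff(\mathbb{C}^{2k})/\mathcal{L}\simeq \mathcal{Z}_n(\mathbb{C}^{2k})$. No step here is genuinely difficult: everything reduces to unwinding the explicit formula for $\mathcal{Z}_n(f)$ and citing the previous lemma. The only thing worth double-checking is the consistency of the identification with the product decomposition $\mathcal{Z}_n(\mathbb{C}^{2k})=\mathbb{C}^{2k}\times Q$, which follows because the quotient of the semidirect product $\mathbb{C}^{2k}\rtimes\GL_{2k}(\mathbb{C})$ by $\{0\}\times\Lambda$ is canonically $\mathbb{C}^{2k}\times(\GL_{2k}(\mathbb{C})/\Lambda)=\mathbb{C}^{2k}\times Q$.
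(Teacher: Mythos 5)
Your proof is correct and follows essentially the same route as the paper: use the transitivity of $\GL_{2k}(\mathbb{C})$ on $Q$ from the previous lemma to find $B_0$ with $B_0f_0=(S',S'',\Sigma',\Sigma'')$, take $c_0=y$, and observe that $\mathcal{Z}_n(B_0z+c_0)(w_0)$ hits the target. The paper only writes out this transitivity step explicitly; your additional verification of the stabilizer (forcing $c=0$ and then $B\in\Lambda$) and the appeal to orbit--stabilizer are exactly the details it leaves implicit.
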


\begin{proof}
Let $(y,S',S'',\Sigma',\Sigma'') \in \mathcal{Z}_n(\mathbb{C}^{2k})=\mathbb{C}^{2k} \times Q.$ We saw that $\GL_{2k}(\mathbb{C})$ acts transitively on $Q.$ So there is a $B_0 \in \GL_{2k}(\mathbb{C})$ such that $(S',S'',\Sigma',\Sigma'')=B_0 f_0.$ Put $c_0=y$ so that $\mathcal{Z}_n(B_0z + c_0)(w_0)=(y,S',S'',\Sigma',\Sigma'').$ 
\end{proof}

\section{Affine bundle structure}

At the beginning of Section 1, we introduced the sub-bundle $\mathbf{\Delta}_{n,k} \subset \pi^*_Y(T_Y),$ where for any $w=(y,S',S'',\Sigma',\Sigma'') \in \mathcal{Z}_n(Y),$ $\mathbf{\Delta}_{n,k,w}=S' \oplus \Sigma'',$ and recall that $\mathbf{D}_{n,k}=d \pi^{-1}_Y(\mathbf{\Delta}_{n,k}).$ Let $\pi_{n,k}:\Gr^{\mathbb{C}}(\mathbf{\Delta}_{n,k},n) \rightarrow \mathcal{Z}_n(Y)$ be the Grassmannian bundle. Let $\gamma_{n,k} \rightarrow \Gr^{\mathbb{C}}(\mathbf{\Delta}_{n,k},n)$ be the tautological bundle with fiber $\gamma_{n,k,L}=L$ over any point $L \in \Gr^{\mathbb{C}}(\mathbf{\Delta}_{n,k},n),$ viewed as a vector subspace of the corresponding fiber of $\mathbf{\Delta}_{n,k} \rightarrow \mathcal{Z}_n(Y).$  Since both $\gamma_{n,k}$ and $\pi^*_{n,k}(T_{\mathcal{Z}_n(Y)/Y})$ are vector bundles on $\Gr^{\mathbb{C}}(\mathbf{\Delta}_{n,k},n),$ we can form the vector bundle \newline $h:\Hom(\gamma_{n,k},\pi^*_{n,k}(T_{\mathcal{Z}_n(Y)/Y})) \rightarrow \Gr^{\mathbb{C}}(\mathbf{\Delta}_{n,k},n).$ The typical fiber of this bundle is $\Hom(\mathbb{C}^n,\mathbb{C}^{N_{n,k}-2k}),$ coming from the fact that for any $(w,S_w) \in \Gr^{\mathbb{C}}(\mathbf{\Delta}_{n,k},n),$ \[h^{-1}(w,S_w)=\Hom(\gamma_{n,k,S_w},\pi^*_{n,k}(T_{\mathcal{Z}_n(Y)/Y})),\] $\dim_{\mathbb{C}}(S_w)=n,$ and $\dim_{\mathbb{C}}(T_{\mathcal{Z}_n(Y)/Y,w})=N_{n,k}-2k.$

\begin{remark}\label{obv}
Let $V$ be a complex vector space and $W \subset V$ be a subspace. Let $p:V \to V/W$ be the linear projection map, $v \mapsto v \mod{W}.$ Define $\Gr^\mathrm{o}(V,r):=\{U \in \Gr^{\mathbb{C}}(V,r) \mid p|_U \mbox{ is injective}\}.$ Choose a subspace $T$ that is complementary to $W,$ i.e.\ $V=T \oplus W.$ Then,

\begin{equation*}
\begin{split}
\Gr^\mathrm{o}(V,r)&=\{U \in \Gr^{\mathbb{C}}(V,r) \mid p(U) \in \Gr^{\mathbb{C}}(V/W,r)\}\\
&=\{U \in \Gr^{\mathbb{C}}(V,r) \mid U \cap W=\{0\}\}\\
&=\{\Gamma(f) \mid f \in \Hom(S,W) \mbox{ for some }S \in \Gr^{\mathbb{C}}(T,r)\}.
\end{split} 
\end{equation*}

Since $T \simeq V/W,$ $\Gr^{\mathbb{C}}(T,r)$ and $\Gr^{\mathbb{C}}(V/W,r)$ are indistinguishable as complex manifolds. Denote by $\eta \rightarrow \Gr^{\mathbb{C}}(V/W,r)$ the usual tautological bundle. Then, the above gives a biholomorphism (and even a biregular algebraic map) from $\Gr^\mathrm{o}(V,r)$ to the total space of the vector bundle $\Hom(\eta,W) \rightarrow \Gr^{\mathbb{C}}(V/W,r),$ and here $W$ is to be understood as the trivial vector bundle $\Gr^{\mathbb{C}}(V/W,r) \times W \rightarrow \Gr^{\mathbb{C}}(V/W,r).$ Notice, however, that this identification depends on the choice of complementary subspace $T,$ and that the ``zero section" of the vector bundle moves with the choice of $T$ when it is mapped into $\Gr^\mathrm{o}(V,r).$

\begin{figure}
\centering
\begin{tikzpicture}
\draw[thick] (-3,-2) -- (-3,4) node[anchor=north west]{};
\draw[thick] (-3,4) -- (-2,3) node[anchor=north west]{};
\draw[thick] (-3,4) -- (-1,4) node[anchor=north east]{};
\draw[thick,->] (0,0) -- (3,0) node[anchor=north west] {$T$};
\draw[thick,->] (0,0) -- (-2.2,-2.04) node[anchor=north west] {$T$};
\draw[thick,->] (0,0) -- (2.5,-1.66) node[anchor=north west] {$S$};
\draw[thick,->] (0,0) -- (0,3) node[anchor=south east] {$W$};
\draw[thick,->] (0,0) -- (3,3) node[anchor=west] {$\Gamma(f)$};
\draw[dotted,thick] (1.28,-.8) -- (1.28,1.2);
\draw[dotted,thick] (0,2.2) -- (1.36,1.28);
\node[anchor=north] at (1.6,-.5) {$\footnotesize{x}$};
\node[anchor=east] at (0,2.2) {$\footnotesize{f(x)}$};
\node[anchor=west] at (1.28,1.28) {$\footnotesize{x+f(x)}$};
\node[anchor=west] at (-2.6,3.7) {$V$};
\end{tikzpicture}
\caption{A point $\Gamma(f)$ in $\Gr^\mathrm{o}(V,r),$ where $S \subset T$ is a \emph{moving} $r$-dimensional subspace, namely a point of the Grassmannian $\Gr^{\mathbb{C}}(T,r) \simeq \Gr^{\mathbb{C}}(V/W,r).$}
\end{figure}

\end{remark}

Given any vector bundle homomorphisms $F \in \Hom(\gamma_{n,k},\pi^*_{n,k}(T_{\mathcal{Z}_n(Y)/Y})),$ if $Id_{\gamma_{n,k}}$ is the identity morphism, we can produce another vector bundle morphism $Id_{\gamma_{n,k}} \oplus F: \gamma_{n,k} \to \gamma_{n,k} \oplus \pi^*_{n,k}(T_{\mathcal{Z}_n(Y)/Y}) \subset \pi^*_{n,k}(\mathbf{D}_{n,k}),$ and the graph of $F$ is then $\Gamma(F):=\im(Id_{\gamma_{n,k}} \oplus F).$ This can be regarded as a sub-bundle $\Gamma(F) \subset \gamma_{n,k} \oplus \pi^*_{n,k}(T_{\mathcal{Z}_n(Y)/Y}) \to \Gr^{\mathbb{C}}(\mathbf{\Delta}_{n,k},n)$ and the fiber over $(w,V)$ is the usual graph of the linear map $F(w,V)$ in $\Hom(V,T_{\mathcal{Z}_n(Y)/Y,w}),$ i.e.\ $\Gamma(F)_{(w,V)}=\im(Id_V + F(w,V))=\Gamma(F(w,V)).$ Let \[\Theta:\Hom(\gamma_{n,k},\pi^*_{n,k}(T_{\mathcal{Z}_n(Y)/Y})) \to \Lambda^2 \gamma^*_{n,k} \otimes \pi^*_{n,k}(T_{\mathcal{Z}_n(Y)}/\mathbf{D}_{n,k})\] be the bundle morphism $\Theta(F)=\theta|_{\Gamma(F) \times \Gamma(F)}.$ On fibers, this becomes a map 
\[\Theta(w,V):\Hom(\gamma_{n,k,V},T_{\mathcal{Z}_n(Y)/Y,w}) \to \Lambda^2 \gamma^*_{n,k,V} \otimes T_{\mathcal{Z}_n(Y),w}/\mathbf{D}_{n,k,w},\] where $\Theta(w,V)(f)=\theta(w)|_{\Gamma(f) \times \Gamma(f)}.$

Let $\rho: \Gro^\mathrm{o}_{n,k} \rightarrow \Gr^{\mathbb{C}}(\mathbf{\Delta}_{n,k},n)$ be the map with definition $\rho(w,V_w)=(w,d \pi(w)(V_w))$ for any $w \in \mathcal{Z}_n(Y)$ and $V_w \in \Gro^\mathrm{o}_{n,k,w}.$

\begin{lemma}\label{bigtheta}
If $w \in \mathcal{Z}_n(Y)$ and $V \in \Gr^{\mathbb{C}}(\mathbf{\Delta}_{n,k,w},n),$ then
\begin{enumerate}
 \item $\Theta(w,V)$ is linear, and
 \item $\mathbf{I}^\mathrm{o}_{n,k,w} \cap \rho^{-1}(w,V) \simeq \ker(\Theta(w,V)).$ 
\end{enumerate}
\end{lemma}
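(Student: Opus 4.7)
The plan is to reduce both assertions to a direct coordinate computation in the chart $U_y \times \mathcal{A}(f_y)$ of Section 2.1 centered at $w_y$, exploiting the explicit formula for $\theta(w_y)$ from Lemma \ref{torsion} and Remark \ref{the}. By Remark \ref{tri} it suffices to work at the coordinate-centered point $w_y$, since any $w$ can be arranged to play that role via a local diffeomorphism with $\mathcal{Z}_n(\mathbb{C}^{2k})$.

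The first step is to identify $\rho^{-1}(w_y, V)$ with $\Hom(V, T_{\mathcal{Z}_n(Y)/Y, w_y})$ via the graph construction. The coordinate splitting
\[\mathbf{D}_{n,k, w_y} = \Span_{\mathbb{C}}\Big(\tfrac{\partial}{\partial x_l}\Big)_{l=n+1}^{2k} \oplus \Span_{\mathbb{C}}\Big(\tfrac{\partial}{\partial z_{ij}}\Big)_{(i,j) \in (\mathcal{I})}\]
gives a canonical horizontal lift of $V \subset \mathbf{\Delta}_{n,k, w_y}$, and each $V_{w_y} \in \Gro^\mathrm{o}_{n,k, w_y}$ projecting isomorphically onto $V$ is then uniquely the graph $\Gamma(F) = \{v + F(v) : v \in V\}$ of some $F \in \Hom(V, T_{\mathcal{Z}_n(Y)/Y, w_y})$.

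For (1), the key observation extracted from Remark \ref{the} is that $\theta(w_y)(X, Y)$ always couples a horizontal component of one argument with a vertical component of the other; it contains no horizontal--horizontal or vertical--vertical contributions. Expanding by alternating bilinearity,
\[\theta(w_y)\bigl(v_1 + F(v_1),\, v_2 + F(v_2)\bigr) = \theta(w_y)(v_1, v_2) + \theta(w_y)(v_1, F(v_2)) + \theta(w_y)(F(v_1), v_2) + \theta(w_y)(F(v_1), F(v_2)),\]
the pure horizontal term and the pure vertical term vanish, leaving
\[\Theta(w_y, V)(F)(v_1, v_2) = \theta(w_y)(v_1, F(v_2)) - \theta(w_y)(v_2, F(v_1)),\]
which is manifestly linear in $F$. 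Part (2) then reduces to unraveling definitions: $\Gamma(F) \in \mathbf{I}_{n,k, w_y}$ exactly when $\theta(w_y)|_{\Gamma(F) \times \Gamma(F)} \equiv 0$, i.e.\ when $\Theta(w_y, V)(F) = 0$. Since $\rho^{-1}(w_y, V)$ already sits inside $\Gro^\mathrm{o}_{n,k, w_y}$ by definition of $\rho$, the intersection with $\mathbf{I}^\mathrm{o}_{n,k, w_y}$ coincides with the intersection with $\mathbf{I}_{n,k, w_y}$, and the graph identification supplies the desired bijection.

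The only mildly delicate point is bookkeeping: one must verify that the coordinate-based splitting of $\mathbf{D}_{n,k, w_y}$ used to represent graphs matches the bundle-theoretic embedding $\gamma_{n,k} \oplus \pi^*_{n,k}(T_{\mathcal{Z}_n(Y)/Y}) \hookrightarrow \pi^*_{n,k}(\mathbf{D}_{n,k})$ implicit in the definition of $\Theta$, so that fibers of $\rho$ really parameterize graphs in the intrinsic sense. Once this is pinned down, the rest of the argument is a clean symbolic calculation, and linearity together with the desired isomorphism fall out of the vanishing of the two ``diagonal'' components of $\theta$.
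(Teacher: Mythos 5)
Your argument is correct and follows essentially the same route as the paper: both use the coordinate formula of Remark \ref{the} to kill the horizontal--horizontal and vertical--vertical terms, expand $\theta(w)(\zeta+F(\zeta),\eta+F(\eta))$ by bilinearity and antisymmetry to exhibit linearity in $F$, and obtain part (2) by identifying $\rho^{-1}(w,V)$ with graphs of homomorphisms $V \to T_{\mathcal{Z}_n(Y)/Y,w}$ (the paper merely defers that identification to the proof of the subsequent theorem). Your closing remark about matching the coordinate splitting of $\mathbf{D}_{n,k}$ with the bundle-theoretic one is exactly the subtlety the paper also flags there.
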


\begin{proof}
To prove 1., let $\zeta,\eta \in V.$ Remark \ref{the} implies that $\theta(w)(\zeta,\eta)=\theta(w)(f(\zeta),f(\eta))=0.$ From the bilinearity and anti-symmetry of $\theta(w),$ it then follows that $\theta(w)(\zeta + f(\zeta),\eta + f(\eta))=\theta(w)(\zeta,f(\eta))-\theta(w)(\eta,f(\zeta)).$ Apply this identity to $f+g$ to find that
\[\theta(w)(\zeta + (f+g)(\zeta),\eta + (f+g)(\eta))=\theta(w)(\zeta + f(\zeta),\eta + f(\eta)) + \theta(w)(\zeta + g(\zeta),\eta + g(\eta)).\] Clearly, for any $\lambda \in \mathbb{C},$ $\theta(w)(\zeta + \lambda f(\zeta),\eta + \lambda f(\eta))=\lambda \theta(w)(\zeta+f(\zeta),\eta+f(\eta)).$ The above also shows that $\Theta$ is a true vector bundle homomorphism.

The second claim will follow at once from the description of the fiber $\rho^{-1}(w,V)$ that we provide in the proof of the theorem below.
\end{proof} 

\begin{theorem*}\label{bigtheorem}
The spaces $\Gro^\mathrm{o}_{n,k}$ and $\mathbf{I}^\mathrm{o}_{n,k}$ have the structure of holomorphic affine linear bundles over the Grassmannian bundle $\Gr^{\mathbb{C}}(\mathbf{\Delta}_{n,k},n).$ 
\end{theorem*}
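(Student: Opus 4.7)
The plan is to apply Remark \ref{obv} fiberwise and then globalize via a canonical affine action. Fix $w \in \mathcal{Z}_n(Y)$ and apply the remark with ambient space $\mathbf{D}_{n,k,w}$, subspace $T_{\mathcal{Z}_n(Y)/Y,w}$ (the vertical tangent space, which is $\ker d\pi_Y|_{\mathbf{D}_{n,k,w}}$), and dimension $n$. Since $d\pi_Y$ identifies $\mathbf{D}_{n,k,w}/T_{\mathcal{Z}_n(Y)/Y,w}$ with $\mathbf{\Delta}_{n,k,w}$, the remark realizes $\Gro^\mathrm{o}_{n,k,w}$ as the total space of a vector bundle $\Hom(\eta, T_{\mathcal{Z}_n(Y)/Y,w}) \to \Gr^{\mathbb{C}}(\mathbf{\Delta}_{n,k,w}, n)$, once a complement of $T_{\mathcal{Z}_n(Y)/Y,w}$ in $\mathbf{D}_{n,k,w}$ is chosen. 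The absence of a canonical such complement is precisely what turns the structure into an affine bundle rather than a vector bundle.

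To globalize, I would describe a canonical, free, and transitive action of the vector bundle $\Hom(\gamma_{n,k},\pi^*_{n,k}(T_{\mathcal{Z}_n(Y)/Y}))$ on $\rho:\Gro^\mathrm{o}_{n,k} \to \Gr^{\mathbb{C}}(\mathbf{\Delta}_{n,k},n)$. Given $(w,V)$ and two lifts $V_w^1, V_w^2 \in \rho^{-1}(w,V)$, set $(V_w^2 - V_w^1)(v) := v^2 - v^1$, where $v^i \in V_w^i$ is the unique preimage of $v \in V$ under $d\pi_Y$; this difference lies in $T_{\mathcal{Z}_n(Y)/Y,w}$ and depends linearly on $v$, hence gives a well-defined element of $\Hom(V, T_{\mathcal{Z}_n(Y)/Y,w})$. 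Transitivity and freeness are straightforward. Local triviality of $\rho$ as an affine bundle then follows from picking local holomorphic splittings of the short exact sequence $0 \to T_{\mathcal{Z}_n(Y)/Y} \to \mathbf{D}_{n,k} \to \mathbf{\Delta}_{n,k} \to 0$, which exist over the trivializing coordinate charts of Section 2.1; each splitting furnishes a local section of $\rho$, and combined with the canonical action gives an affine trivialization.

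For $\mathbf{I}^\mathrm{o}_{n,k}$, I would use the morphism $\Theta$ defined just before Lemma \ref{bigtheta}. Fix $(w,V)$ and pick a local splitting to obtain a distinguished lift $V_w^0$. Every other lift has the form $\Gamma(f)$ for a unique $f \in \Hom(V, T_{\mathcal{Z}_n(Y)/Y,w})$, and by definition of $\Theta$, the lift $\Gamma(f)$ is $\theta$-isotropic exactly when $\Theta(w,V)(f)=0$. By part 1 of Lemma \ref{bigtheta}, $\Theta(w,V)$ is linear, so the set of isotropic lifts is either empty or an affine subspace of $\rho^{-1}(w,V)$ modeled on $\ker\Theta(w,V)$. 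Nonemptiness follows from Lemma \ref{torsion}: choosing coordinates centered at $w$, the coordinate-based lift $V_w^0 = V$ (identified with its image in $\mathbf{D}_{n,k,w}$ via the coordinate splitting) has zero $\partial/\partial z_{ij}$-components, and then $\theta(w)|_{V_w^0 \times V_w^0} = 0$ by direct inspection of the formula in Lemma \ref{torsion} (equivalently, by Remark \ref{the}). This simultaneously settles part 2 of Lemma \ref{bigtheta} and exhibits $\mathbf{I}^\mathrm{o}_{n,k}$ as a holomorphic affine sub-bundle of $\Gro^\mathrm{o}_{n,k}$, hence as an affine linear bundle over $\Gr^{\mathbb{C}}(\mathbf{\Delta}_{n,k},n)$.

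The hard part is confirming that the construction is uniformly holomorphic for $\mathbf{I}^\mathrm{o}_{n,k}$: while the affine structure on $\Gro^\mathrm{o}_{n,k}$ has a fixed model vector bundle $\Hom(\gamma_{n,k}, \pi^*_{n,k}(T_{\mathcal{Z}_n(Y)/Y}))$ of constant rank, the model for $\mathbf{I}^\mathrm{o}_{n,k}$ is the kernel of the holomorphic morphism $\Theta$, whose rank could a priori jump. I would verify directly from the coordinate description of $\theta$ in Lemma \ref{torsion} that $\Theta$ has constant rank, so that $\ker\Theta$ is a genuine holomorphic sub-bundle of $\Hom(\gamma_{n,k}, \pi^*_{n,k}(T_{\mathcal{Z}_n(Y)/Y}))$ and the affine linear structure globalizes without issue.
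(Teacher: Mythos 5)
Your proposal follows essentially the same route as the paper: describe each fiber $\rho^{-1}(w,V)$ as the set of graphs of homomorphisms $V \to T_{\mathcal{Z}_n(Y)/Y,w}$ relative to a local holomorphic splitting of $0 \to T_{\mathcal{Z}_n(Y)/Y} \to \mathbf{D}_{n,k} \to \mathbf{\Delta}_{n,k} \to 0$ coming from the trivialization of $\pi_Y$, exhibit the free and transitive action of $\Hom(\gamma_{n,k},\pi^*_{n,k}(T_{\mathcal{Z}_n(Y)/Y}))$ on the fibers of $\rho$, and cut out $\mathbf{I}^\mathrm{o}_{n,k}$ by the linear condition $\Theta(w,V)(f)=0$, with nonemptiness of the isotropic fiber witnessed by the coordinate lift via Remark \ref{the}. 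The one issue you flag that the paper's proof leaves implicit --- that $\ker\Theta$ must have constant rank to be a holomorphic sub-bundle --- is genuine but is resolved exactly as you propose, by the coordinate formula for $\theta$, which shows $\Theta(w,V)$ is surjective onto $\Lambda^2\gamma^*_{n,k,V}\otimes T_{\mathcal{Z}_n(Y),w}/\mathbf{D}_{n,k,w}$ (cf.\ Proposition \ref{sphere} and the remark following it).
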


\begin{proof}
Let $(U_{\alpha})$ be a local trivialization of the holomorphic fiber bundle $\mathcal{Z}_n(Y) \xrightarrow{\pi_Y} Y$ so that $\mathcal{Z}_n(U_{\alpha})=\pi^{-1}_Y(U_{\alpha}) \simeq U_{\alpha} \times Q$ (cf.\ Remark \ref{tri}). The relative tangent bundle sequence need not be globally split. However, it is locally split with respect to a trivialization, meaning that, in particular, the short exact sequence \[0 \rightarrow T_{\mathcal{Z}_n(Y)/Y}\vert_{\mathcal{Z}_n(U_{\alpha})} \rightarrow T_{\mathcal{Z}_n(Y)}\vert_{\mathcal{Z}_n(U_{\alpha})} \rightarrow \pi^*_Y(T_Y)\vert_{\mathcal{Z}_n(U_{\alpha})} \rightarrow 0\] is split and induces the split sequence \[(*) \quad 0 \rightarrow T_{\mathcal{Z}_n(Y)/Y}\vert_{\mathcal{Z}_n(U_{\alpha})} \rightarrow \mathbf{D}_{n,k} \vert_{\mathcal{Z}_n(U_{\alpha})} \rightarrow \mathbf{\Delta}_{n,k}\vert_{\mathcal{Z}_n(U_{\alpha})} \rightarrow 0.\] Not all splittings of $(*)$ necessarily come from a trivialization of $\mathcal{Z}_n(Y) \xrightarrow{\pi_Y} Y.$

Define $G_{\alpha}:=\Gr^{\mathbb{C}}(\mathbf{\Delta}_{n,k},n)\vert_{\mathcal{Z}_n(U_{\alpha})}.$ Let $w_{\alpha} \in \mathcal{Z}_n(U_{\alpha})$ and $S_{w_{\alpha}} \in \Gr^{\mathbb{C}}(\mathbf{\Delta}_{n,k,w_{\alpha}},n).$ The direct sum of vector spaces $\gamma_{n,k}\vert_{G_{\alpha},S_{w_{\alpha}}} \oplus \pi^*_{n,k}(T_{\mathcal{Z}_n(Y)/Y}\vert_{\mathcal{Z}_n(U_{\alpha}),w_{\alpha}})$ is defined thanks to the splitting of $(*),$ and here $\gamma_{n,k}\vert_{G_{\alpha}}$ is the restriction of the tautological bundle to $G_{\alpha}.$ Notice that 

\begin{equation*}
\begin{split}
\rho^{-1}(w_{\alpha},S_{w_{\alpha}})&=\{V_{w_{\alpha}} \in \Gro^\mathrm{o}_{n,k,w_{\alpha}} \mid d \pi_Y(w_{\alpha})(V_{w_{\alpha}})=S_{w_{\alpha}}\} \\
&=\{V_{w_{\alpha}} \in \Gr^{\mathbb{C}}(\gamma_{n,k}\vert_{G_{\alpha},S_{w_{\alpha}}} \oplus \pi^*_{n,k}(T_{\mathcal{Z}_n(Y)/Y}\vert_{\mathcal{Z}_n(U_{\alpha}),w_{\alpha}}),n) \mid \\
& V_{w_{\alpha}} \cap \pi^*_{n,k}(T_{\mathcal{Z}_n(Y)/Y}\vert_{\mathcal{Z}_n(U_{\alpha}),w_{\alpha}}) = \{0\}\} \\
&=\{\Gamma(f) \mid f \in \Hom(\gamma_{n,k}\vert_{G_{\alpha},S_{w_{\alpha}}},\pi^*_{n,k}(T_{\mathcal{Z}_n(Y)/Y}\vert_{\mathcal{Z}_n(U_{\alpha}),w_{\alpha}}))\}.
\end{split}
\end{equation*}

Remark \ref{obv} is intended to provide some geometric intuition of the above identifications.

Consider the map \[\mathbf{t}:\Hom(\gamma_{n,k},\pi^*_{n,k}(T_{\mathcal{Z}_n(Y)/Y})) \times_{\Gr^{\mathbb{C}}(\mathbf{\Delta}_{n,k},n)} \Gro^\mathrm{o}_{n,k} \rightarrow \Gro^\mathrm{o}_{n,k},\] where the domain is the fiber product, and where the map $\mathbf{t}$ is defined on the fiber over $(w_{\alpha},S_{w_{\alpha}})$ by \[\Hom(\gamma_{n,k}\vert_{G_{\alpha},S_{w_{\alpha}}},\pi^*_{n,k}(T_{\mathcal{Z}_n(Y)/Y}\vert_{\mathcal{Z}_n(U_{\alpha}),w_{\alpha}}))\times \rho^{-1}(w_{\alpha},S_{w_{\alpha}}) \rightarrow \rho^{-1}(w_{\alpha},S_{w_{\alpha}}),\] \[(f,\Gamma(g)) \mapsto \Gamma(f+g).\] Now, if $\Gamma(f'), \Gamma(f'') \in \rho^{-1}(w_{\alpha},S_{w_{\alpha}}),$ $\Gamma(f'')=\Gamma(f+f')$ iff $f=f''-f',$ which is to say that the action of $\Hom(\gamma_{n,k}\vert_{G_{\alpha},S_{w_{\alpha}}},\pi^*_{n,k}(T_{\mathcal{Z}_n(Y)/Y}\vert_{\mathcal{Z}_n(U_{\alpha}),w_{\alpha}}))$ on $\rho^{-1}(w_{\alpha},S_{w_{\alpha}})$ is free and transitive. The map $\mathbf{t}$ thus realizes the fiber $\rho^{-1}(w_{\alpha},S_{w_{\alpha}})$ as an affine linear space modelled on the vector space \[\Hom(\gamma_{n,k}\vert_{G_{\alpha},S_{w_{\alpha}}},\pi^*_{n,k}(T_{\mathcal{Z}_n(Y)/Y}\vert_{\mathcal{Z}_n(U_{\alpha}),w_{\alpha}})).\]

There are a few subtleties. Since $\rho^{-1}(w_{\alpha},S_{w_{\alpha}})$ is not genuinely a vector space, an isomorphism with $\Hom(\gamma_{n,k}\vert_{G_{\alpha},S_{w_{\alpha}}},\pi^*_{n,k}(T_{\mathcal{Z}_n(Y)/Y}\vert_{\mathcal{Z}_n(U_{\alpha}),w_{\alpha}}))$ cannot be defined. The biholomorphisms between fibers of the bundles $\rho: \Gro^\mathrm{o}_{n,k} \rightarrow \Gr^{\mathbb{C}}(\mathbf{\Delta}_{n,k},n)$ and $h:\Hom(\gamma_{n,k},\pi^*_{n,k}(T_{\mathcal{Z}_n(Y)/Y})) \rightarrow \Gr^{\mathbb{C}}(\mathbf{\Delta}_{n,k},n)$ are not canonical. They depend on the local holomorphic splitting of the relative tangent bundle sequence. That $Y$ is a generic complex even dimensional manifold is the underlying reason for there being no natural splitting of the sequence, which in turn implies the non-naturality of the biholomorphisms.

Note that the sets $G_{\alpha},$ which are preimages of the open subsets $\mathcal{Z}_n(U_{\alpha}) \subset \mathcal{Z}_n(Y)$ under the continuous bundle projection $\pi_{n,k},$ form an open cover of the total space $\Gr^{\mathbb{C}}(\mathbf{\Delta}_{n,k},n).$ Let $\mathbf{\Gamma}:=\{\Gamma(f) \mid f \in \Hom(\mathbb{C}^n,\mathbb{C}^{N_{n,k}-2k})\},$ which is an affine linear space modelled after $\Hom(\mathbb{C}^n,\mathbb{C}^{N_{n,k}-2k}).$ For all $(w,S_w) \in G_{\alpha},$ we have a biholomorphism $\rho^{-1}(w,S_w)\simeq \mathbf{\Gamma},$ inducing a biholomorphism $\rho^{-1}(G_{\alpha}) \simeq G_{\alpha} \times \mathbf{\Gamma}.$ So the $G_{\alpha}$ are a trivialization of $\rho: \Gro^\mathrm{o}_{n,k} \rightarrow \Gr^{\mathbb{C}}(\mathbf{\Delta}_{n,k},n)$ as a holomorphic fiber bundle with typical fiber $\mathbf{\Gamma},$ and so $\rho: \Gro^\mathrm{o}_{n,k} \rightarrow \Gr^{\mathbb{C}}(\mathbf{\Delta}_{n,k},n)$ is a holomorphic affine linear bundle modelled on \newline $h:\Hom(\gamma_{n,k},\pi^*_{n,k}(T_{\mathcal{Z}_n(Y)/Y})) \rightarrow \Gr^{\mathbb{C}}(\mathbf{\Delta}_{n,k},n).$ 

Part 2 of Lemma \ref{bigtheta} follows from the biholomorphism $\rho^{-1}(w_{\alpha},S_{w_{\alpha}}) \simeq \Hom(\gamma_{n,k}\vert_{G_{\alpha},S_{w_{\alpha}}},\pi^*_{n,k}(T_{\mathcal{Z}_n(Y)/Y}\vert_{\mathcal{Z}_n(U_{\alpha}),w_{\alpha}})).$ So the fiber over $(w_{\alpha},S_{w_{\alpha}})$ of the sub-bundle $\rho|_{\mathbf{I}^\mathrm{o}_{n,k}}:\mathbf{I}^\mathrm{o}_{n,k} \rightarrow \Gr^{\mathbb{C}}(\mathbf{\Delta}_{n,k},n)$ is an affine linear space modelled on the vector subspace $\ker(\Theta(w_{\alpha},S_{w_{\alpha}}))$ of $\Hom(\gamma_{n,k,S_{w_{\alpha}}},\pi^*_{n,k}(T_{\mathcal{Z}_n(Y)/Y,w_{\alpha}})).$ The bundle $\mathbf{I}^\mathrm{o}_{n,k}$ is thus an affine bundle modelled on the (sub-)vector bundle $h|_{\ker{\Theta}}:\ker{\Theta} \to \Gr^{\mathbb{C}}(\mathbf{\Delta}_{n,k},n),$ where $\ker{\Theta}:=\{F \in \Hom(\gamma_{n,k},\pi^*_{n,k}(T_{Z_n(Y)/Y})) \mid \Theta(F)=0\}.$
\end{proof}

An immediate consequence of the theorem is that the total spaces $\Gro^\mathrm{o}_{n,k}$ and $\mathbf{I}^\mathrm{o}_{n,k}$ are of the same homotopy type. In particular, $\pi_i(\Gro^\mathrm{o}_{n,k},\mathbf{I}^\mathrm{o}_{n,k})=0,$ for all $i \geq 0.$ However, ordinary relative homotopy groups might not be capturing crucial information needed to study topological obstructions to integrability. Perhaps, a better perspective is furnished by fibered homotopy groups \cite{fiber}. In order to implement this viewpoint, it would be helpful to have a more complete understanding of the algebraic geometry of the fibers of $\Gro^\mathrm{o}_{n,k}$ and $\mathbf{I}^\mathrm{o}_{n,k},$ and of the topological constraints affecting the map $\im(\bar{\partial}_{J} F): X \rightarrow \mathbf{I}^\mathrm{o}_{n,k}.$ This will be the subject of future research. 

\subsection{The quotient bundle $\Gro^\mathrm{o}_{n,k}/\mathbf{I}^\mathrm{o}_{n,k}$}

Our next goal is to understand the quotient $\Gro^\mathrm{o}_{n,k}/\mathbf{I}^\mathrm{o}_{n,k},$ which is the vector bundle $\Hom(\gamma_{n,k},\pi^*_{n,k}(T_{\mathcal{Z}_n(Y)/Y}))/\ker{\Theta}.$ We begin with a characterization of its isomorphism type. 

\begin{proposition}\label{sphere}
There is an isomorphism of vector bundles on $\Gr^{\mathbb{C}}(\mathbf{\Delta}_{n,k},n),$
\[\Hom(\gamma_{n,k},\pi^*_{n,k}(T_{\mathcal{Z}_n(Y)/Y}))/\ker{\Theta} \simeq \Hom(\Lambda^2 \gamma_{n,k},\pi^*_{n,k}(T_{\mathcal{Z}_n(Y)}/\mathbf{D}_{n,k})).\]
\end{proposition}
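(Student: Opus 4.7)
The plan is to invoke the First Isomorphism Theorem for vector bundle morphisms, applied to $\Theta$ defined just before Lemma~\ref{bigtheta}. Since $\Theta$ is $\mathcal{O}$-linear by Lemma~\ref{bigtheta}(1) and the target $\Lambda^2\gamma^*_{n,k}\otimes \pi^*_{n,k}(T_{\mathcal{Z}_n(Y)}/\mathbf{D}_{n,k})$ is canonically $\Hom(\Lambda^2\gamma_{n,k},\pi^*_{n,k}(T_{\mathcal{Z}_n(Y)}/\mathbf{D}_{n,k}))$, it suffices to prove that $\Theta$ is fiberwise surjective: surjectivity onto a bundle of constant rank forces $\Theta$ to have constant rank, so that $\ker\Theta$ is a holomorphic sub-bundle and $\Theta$ descends to a fiberwise bijection — hence a bundle isomorphism — on the quotient.

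To establish fiberwise surjectivity I will fix an arbitrary point $(w,V)\in\Gr^{\mathbb{C}}(\mathbf{\Delta}_{n,k},n)$ and work in the coordinate chart centered at $w$ constructed in the preamble to Lemma~\ref{torsion}. There $\mathbf{\Delta}_{n,k,w}=\Span_{\mathbb{C}}(\partial/\partial x_j)_{j=n+1}^{2k}$, $T_{\mathcal{Z}_n(Y)/Y,w}=\Span_{\mathbb{C}}(\partial/\partial z_{ij})_{(i,j)\in(\mathcal{I})}$, and $(T_{\mathcal{Z}_n(Y)}/\mathbf{D}_{n,k})_w\simeq\Span_{\mathbb{C}}(\partial/\partial x_i)_{i=1}^n$. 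Writing a basis of $V$ as $\zeta^{(a)}=\sum_{j=n+1}^{2k}c^a_j\,\partial/\partial x_j$ and $F(\zeta^{(a)})=\sum_{(i,j)\in(\mathcal{I})}F^{ij}_a\,\partial/\partial z_{ij}$, the identity $\Theta(F)(\zeta,\eta)=\theta(w)(\zeta,F(\eta))-\theta(w)(\eta,F(\zeta))$ from the proof of Lemma~\ref{bigtheta}(1), combined with Remark~\ref{the}, yields
\[\bigl(\Theta(F)(\zeta^{(a)},\zeta^{(b)})\bigr)_i \;=\; -2\bigl(CF^{(i)}-(CF^{(i)})^T\bigr)_{ab},\]
where $C=(c^a_j)$ is the $n\times(2k-n)$ coefficient matrix of the chosen basis and $F^{(i)}=(F^{ij}_a)$ is an independent $(2k-n)\times n$ matrix of unknowns, one per index $i\in\{1,\dots,n\}$.

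Surjectivity of $\Theta(w,V)$ thus decouples over $i$ into the question of whether $F^{(i)}\mapsto CF^{(i)}-(CF^{(i)})^T$ surjects onto antisymmetric $n\times n$ matrices. Since $\dim V=n$ and the hypothesis $k\ge n$ forces $\dim\mathbf{\Delta}_{n,k,w}=2k-n\ge n$, the matrix $C$ has full row rank $n$ and admits a right inverse $C^+$. Given any antisymmetric $M^{(i)}$, the choice $F^{(i)}=\tfrac{1}{2}\, C^+ M^{(i)}$ produces $CF^{(i)}=\tfrac{1}{2}M^{(i)}$, whence $CF^{(i)}-(CF^{(i)})^T=M^{(i)}$ by the antisymmetry of $M^{(i)}$. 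This establishes fiberwise surjectivity at the arbitrary point $(w,V)$, and the proposition follows.

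The only genuinely substantive step I anticipate is the linear-algebra reduction above: recognizing that in the coordinates of Lemma~\ref{torsion}, $\Theta$ breaks up into $n$ independent copies of the antisymmetrization of multiplication by a full-rank matrix, a map that is surjective onto antisymmetric matrices precisely because $C$ has a right inverse. Everything surrounding it — the $\mathcal{O}$-linearity of $\Theta$, constancy of rank, and the descent to the quotient — is standard vector bundle theory once that fiberwise statement is secured.
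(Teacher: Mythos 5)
Your proof is correct, but it takes precisely the route that the paper declines: the paper remarks after its own proof that ``the isomorphism also follows after noticing that $\Theta(w,V)$ is surjective,'' but omits that argument as less elegant, and instead computes $\ker(\Theta(w,V))$ explicitly. Concretely, the paper splits $T_{\mathcal{Z}_n(Y)/Y,w_y}$ as $A_{w_y}\oplus\Hom(\mathbf{\Delta}_{n,k,w_y},\Sigma'_y/S'_y)$, checks via Remark \ref{the} that $\Theta(w_y,V)$ kills $\Hom(V,A_{w_y})$, rewrites the remaining piece as $\Hom(V\otimes\mathbf{\Delta}_{n,k,w_y},\Sigma'_y/S'_y)$, and identifies the kernel there with the maps vanishing on the antisymmetrizer ideal $J$, so the quotient is $\Hom(\Lambda^2 V,\Sigma'_y/S'_y)$. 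You instead prove fiberwise surjectivity: after decoupling over the basis $(\partial/\partial x_i)_{i=1}^n$ of $T_{\mathcal{Z}_n(Y),w}/\mathbf{D}_{n,k,w}$, the map becomes $F^{(i)}\mapsto -2\bigl(CF^{(i)}-(CF^{(i)})^T\bigr)$ with $C$ of full row rank $n$ (automatic, since its rows are the coordinates of a basis of the $n$-plane $V$ inside the $(2k-n)$-dimensional space $\mathbf{\Delta}_{n,k,w}$), and the right-inverse trick $F^{(i)}=\tfrac12 C^{+}M^{(i)}$ hits every antisymmetric $M^{(i)}$; a dimension count ($n\binom{n}{2}$ on both sides) confirms the target is all of $\Hom(\Lambda^2 V,\,T_{\mathcal{Z}_n(Y),w}/\mathbf{D}_{n,k,w})$. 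Your version buys two things the paper's does not make explicit: the induced isomorphism is the canonical one coming from $\Theta$ itself rather than from chosen complements, and constancy of rank shows directly that $\ker\Theta$ is a holomorphic sub-bundle, a fact the paper tacitly assumes when it forms the quotient. What it loses is the paper's explicit description of the kernel (the $\Hom(V,A_{w_y})$ and $\Hom(S^2(V),\cdot)$ pieces), which carries geometric information of independent interest. The only step you leave implicit is that an arbitrary $(w,V)$ can be placed at the center $w_y$ of a chart of the type built in Section 2.1 so that Remark \ref{the} applies; this follows from the transitivity of the coordinate changes on flags (the $\GL_{2k}(\mathbb{C})$-homogeneity of $Q$) and is the same normalization the paper's fiberwise computation uses, so it is not a gap.
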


\begin{proof}
The isomorphism 
\begin{equation*}
\begin{split}
T_{\mathcal{Z}_n(Y)/Y,w_y} & \simeq \Hom(S'_y,\Sigma'_y/S'_y) \oplus \Hom(S''_y,\Sigma''_y/S''_y) \oplus  \Hom(\Sigma'_y,\Sigma''_y) \oplus \Hom(\Sigma''_y,\Sigma'_y),
\end{split}
\end{equation*} 
depends on choices of direct sum decompositions $\Sigma'_y=Q_y' \oplus S'_y$ and $\Sigma''_y=Q_y'' \oplus S''_y.$ If we set \[A_{w_y}:=\Hom(S''_y,{\Sigma''_y}/{S''_y}) \oplus \Hom(\Sigma'_y,\Sigma''_y) \oplus \Hom(\Sigma''_y,S'_y),\] we get an isomorphism $T_{\mathcal{Z}_n(Y)/Y,w_y} \simeq A_{w_y} \oplus \Hom(\mathbf{\Delta}_{n,k,w_y},{\Sigma'_y}/{S'_y}).$ So for any $V \in \Gr^{\mathbb{C}}(\mathbf{\Delta}_{n,k,w_y},n),$ \[\Hom(V,T_{\mathcal{Z}_n(Y)/Y,w_y}) \simeq \Hom(V, \Hom(\mathbf{\Delta}_{n,k,w_y},{\Sigma'_y}/{S'_y})) \oplus \Hom(V,A_{w_y}).\] Now we show that the linear in $\Hom(V,T_{\mathcal{Z}_n(Y)/Y,w_y})$ function $\Theta(w_y,V)$ vanishes on $\Hom(V,A_{w_y}).$ This implies that \[\ker{(\Theta(w_y,V))}=\ker{\big(\Theta(w_y,V)}|_{\Hom(V, \Hom(\mathbf{\Delta}_{n,k,w_y},{\Sigma'_y}/{S'_y}))}\big) \oplus \Hom(V,A_{w_y})\] so that
\begin{equation*}
\begin{split}
\Hom(V,T_{\mathcal{Z}_n(Y)/Y,w_y})&/\ker{(\Theta(w_y,V))} \simeq  \\
& \Hom(V, \Hom(\mathbf{\Delta}_{n,k,w_y},{\Sigma'_y}/{S'_y}))/\ker{\big(\Theta(w_y,V)}|_{\Hom(V, \Hom(\mathbf{\Delta}_{n,k,w_y},{\Sigma'_y}/{S'_y}))}\big).
\end{split}
\end{equation*} Observe that for any $f \in \Hom(V,A_{w_y}),$ there exist unique $g \in \Hom(V,\Hom(S''_y,{\Sigma''_y}/{S''_y})),$ $g' \in \Hom(V,\Hom(\Sigma'_y,\Sigma''_y))$ and $g'' \in \Hom(V,\Hom(\Sigma''_y,S'_y))$ such that $f=g+g'+g''.$ If $(X^m)_{m=1}^n$ is a basis of $V,$ then in terms of the basis $\big(\frac{\partial}{\partial z_{ij}}\big)_{(i,j) \in (\mathcal{I})}$ of $T_{\mathcal{Z}_n(Y)/Y,w_y},$ 
\begin{equation*}
\begin{split}
f(X^m) &= g(X^m)+g'(X^m)+g''(X^m) \\
&=\sum_{k+1 \leq i \leq k+n, n+k+1 \leq j \leq 2k}  g(X^m)_{ij} \frac{\partial}{\partial z_{ij}} + \sum_{k+1 \leq i \leq 2k, 1 \leq j \leq k}  g'(X^m)_{ij} \frac{\partial}{\partial z_{ij}} \\ 
& +\sum_{n+1 \leq i \leq k, k+1 \leq j \leq 2k}  g''(X^m)_{ij} \frac{\partial}{\partial z_{ij}},
\end{split}
\end{equation*}
for any $1 \leq m \leq n.$ In particular, we see that $f(X^m)_{ij}=0$ for all $1 \leq i \leq n$ and $n+1 \leq j \leq 2k,$ from which it follows that \[\theta(w_y)(X^a+f(X^a),X^b+f(X^b))=-2\sum_{i=1}^n \sum_{j=n+1}^{2k} (X^a_j f(X^b)_{ij}-X^b_j f(X^a)_{ij})\frac{\partial}{\partial x_i}=0.\] 

Next we make use of the isomorphism $\Psi:\Hom(V, \Hom(\mathbf{\Delta}_{n,k,w_y},{\Sigma'_y}/{S'_y})) \to \Hom(V \otimes \mathbf{\Delta}_{n,k,w_y},{\Sigma'_y}/{S'_y}),$ $f \mapsto \Psi(f),$ where for any $a \in V$ and $b \in \mathbf{\Delta}_{n,k,w_y},$ $\Psi(f(a)(b))=f(a \otimes b),$ to re-express the quotient vector space of interest. If $\zeta,\eta \in V,$ then

\begin{equation*}
\begin{split}
\theta(w_y)(\zeta+f(\zeta),\eta+f(\eta))&=-2\sum_{i=1}^n \sum_{j=n+1}^{2k} (\zeta_j f(\eta)_{ij}-\eta_j f(\zeta)_{ij})\frac{\partial}{\partial x_i} \\
&=-2(f(\eta)\zeta-f(\zeta)\eta) \\
&=-2f(\eta \otimes \zeta - \zeta \otimes \eta).
\end{split}
\end{equation*}

Let $J=\langle a \otimes b - b \otimes a \mid a,b \in V \rangle \subset V \otimes V$ be the ideal such that $S^2(V)=V \otimes V/J.$ It is clear that $\Theta(w_y,V)(f)=0$ iff $\theta(w_y)(\zeta+f(\zeta),\eta+f(\eta))=0$ for all $\zeta,\eta \in V$ iff $f|_{J}=0.$ Observe that we have a vector space isomorphism \[\{f \in \Hom(V \otimes V, {\Sigma'_y}/{S'_y}) \mid f|_J=0\} \simeq \Hom(S^2(V),{\Sigma'_y}/{S'_y}).\] Chose a direct sum decomposition $V \oplus R \simeq  \mathbf{\Delta}_{n,k,w_y}$ so that

\[\Hom(V \otimes \mathbf{\Delta}_{n,k,w_y},{\Sigma'_y}/{S'_y}) \simeq \Hom(V \otimes V, {\Sigma'_y}/{S'_y}) \oplus \Hom(V \otimes R,{\Sigma'_y}/{S'_y}).\] Since $\Theta(w_y,V)$ is independent of $\Hom(V \otimes R,{\Sigma'_y}/{S'_y}),$ \[\ker{\big(\Theta(w_y,V)}|_{\Hom(V, \Hom(\mathbf{\Delta}_{n,k,w_y},{\Sigma'_y}/{S'_y}))}\big) \simeq \Hom(S^2(V),{\Sigma'_y}/{S'_y}) \oplus \Hom(V \otimes R,{\Sigma'_y}/{S'_y}).\] Finally, note that the decomposition $V \otimes V = S^2(V) \oplus \Lambda^2(V),$ implies that 

$\Hom(V \otimes V, {\Sigma'_y}/{S'_y}) \oplus \Hom(V \otimes R,{\Sigma'_y}/{S'_y})/\Hom(S^2(V),{\Sigma'_y}/{S'_y}) \oplus \Hom(V \otimes R,{\Sigma'_y}/{S'_y}) \simeq \Hom(\Lambda^2(V),{\Sigma'_y}/{S'_y}), \mbox{ where }T_{\mathcal{Z}_n(Y),w_y}/\mathbf{D}_{n,k,w_y} \simeq {\Sigma'_y}/{S'_y}.$

\end{proof}

The isomorphism also follows after noticing that $\Theta(w,V)$ is surjective. However, inasmuch as we can only see a less elegant proof of this fact, we do not include it here. 

\subsection{Concluding remarks}
Let $(X,J_X)$ be an $n$-dimensional compact almost complex manifold and $F:(X,J_X) \hookrightarrow \mathcal{Z}_n(Y)$ be a universal embedding. Consider the map $\tilde{F}:(X,J_X) \to \Gro^\mathrm{o}_{n,k},$ $\tilde{F}(x)=\im(\bar{\partial}_{J_X} F(x))$ (cf.\ section 2.2), and the following diagram
\[
\begin{tikzcd}
  (\rho \circ \tilde{F})^* \big(\Lambda^2 \gamma^*_{n,k} \otimes \pi^*_{n,k}(T_{\mathcal{Z}_n(Y)}/\mathbf{D}_{n,k}) \big) \arrow[r, rightarrow] \arrow[d]
    & \Lambda^2 \gamma^*_{n,k} \otimes \pi^*_{n,k}(T_{\mathcal{Z}_n(Y)}/\mathbf{D}_{n,k}) \arrow[dr] \\
  (X,J_X) \arrow[r, rightarrow, "\tilde{F}"] 
&\Gro^\mathrm{o}_{n,k} \arrow[r, rightarrow, "\rho"] \arrow[u, "\tilde{\Theta}"]
& \Gr^{\mathbb{C}}(\mathbf{\Delta}_{n,k},n),
 \end{tikzcd}\] where $\tilde{\Theta}$ is defined as $\tilde{\Theta}(\Gamma(F))=\theta|_{\Gamma(F) \times \Gamma(F)},$ and note that based on the discussion preceding Lemma \ref{bigtheta}, we have that $\Gro^\mathrm{o}_{n,k}=\{\Gamma(F) \mid F \in \Hom(\gamma_{n,k}, \pi^*_{n,k}(T_{\mathcal{Z}_n(Y)/Y}))\}.$ Indeed, by Proposition \ref{sphere}, $(\rho \circ \tilde{F})^* \big(\Lambda^2 \gamma^*_{n,k} \otimes \pi^*_{n,k}(T_{\mathcal{Z}_n(Y)}/\mathbf{D}_{n,k}) \big) \simeq (\rho \circ \tilde{F})^*(\Gro^\mathrm{o}_{n,k}/\mathbf{I}^\mathrm{o}_{n,k}).$ Now we can see that the Nijenhuis tensor $N_{J_X}$ of $J_X$ is essentially the pullback of the bundle morphism $\tilde{\Theta}$ by the lift $\tilde{F}$ of $F$ to $\Gro^\mathrm{o}_{n,k},$ i.e.\ 

\begin{proposition}
$N_{J_X}=4 \tilde{\Theta} \circ \tilde{F}.$
\end{proposition}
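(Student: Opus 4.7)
The plan is to prove the identity pointwise at an arbitrary $x \in X$ by a direct local computation, comparing explicit coordinate expressions for the Nijenhuis tensor of $J_X$ at $x$ with the formula for $\theta$ supplied by Lemma \ref{torsion}. First, I would fix $x \in X$, set $w_y := F(x)$, and work in the coordinate chart on $\mathcal{Z}_n(Y)$ centered at $w_y$ from Section 2.1, together with real coordinates on $X$ near $x$ adapted so that $J_X(x)$ takes standard form. In those coordinates, $F$ is described explicitly by Proposition \ref{caliber} and Lemma \ref{diagonal}, so $dF(x)$ can be written down directly and $\bar{\partial}_{J_X} F(x) = \tfrac{1}{2}(dF + J_{\mathcal{Z}_n(Y)} \circ dF \circ J_X)|_x$ can be read off. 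Because $\bar{\partial}_{J_X} F(x)$ is $J_X$-complex antilinear and lands in $\mathbf{D}_{n,k, F(x)}$, its image $\tilde{F}(x)$ is a complex $n$-dimensional subspace of $\mathbf{D}_{n,k, F(x)}$; via Remark \ref{obv} I would describe it as the graph $\Gamma(f_x)$ of a linear map $f_x \in \Hom(\gamma_{n,k, S_{F(x)}}, T_{\mathcal{Z}_n(Y)/Y, F(x)})$ with $S_{F(x)} := d\pi_Y|_{F(x)}(\tilde{F}(x))$, reading off the coefficients $(z_{ij})$ of $f_x$ in terms of the components of $J_X$ and its first partial derivatives at $x$.

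Second, I would apply Remark \ref{the} to compute $\tilde{\Theta}(\tilde{F}(x)) = \theta(F(x))|_{\Gamma(f_x) \times \Gamma(f_x)}$: inserting pairs $\zeta + f_x(\zeta),\ \eta + f_x(\eta)$ into that formula yields an antisymmetric bilinear form in $\zeta, \eta$ with values in $T_{\mathcal{Z}_n(Y), F(x)}/\mathbf{D}_{n,k, F(x)}$ whose coefficients are precisely the $z_{ij}$. In parallel, I would compute $N_{J_X}(x)$ in the same adapted coordinates on $X$ directly from $N_J(U,V) = [U,V] + J[JU,V] + J[U,JV] - [JU,JV]$. Via the identification $\tilde{F}(x) \simeq T^{0,1}_{X,x}$ induced by $\bar{\partial}_{J_X} F(x)$ and the canonical isomorphism $T_{\mathcal{Z}_n(Y), F(x)}/\mathbf{D}_{n,k, F(x)} \simeq \Sigma'_x/S'_x \simeq T^{1,0}_{X,x}$ established at the end of the proof of Proposition \ref{caliber}, both $N_{J_X}(x)$ and $4\,\tilde{\Theta}(\tilde{F}(x))$ become $\mathbb{C}$-bilinear antisymmetric forms on $T^{0,1}_{X,x}$ valued in $T^{1,0}_{X,x}$, and the identity reduces to matching coefficients. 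The scalar $4$ is predicted to arise from the combination of the two factors of $\tfrac{1}{2}$ in $\bar{\partial}_{J_X} F$ (one per argument of $\theta$), the factor $-2$ in the coordinate form of $\theta$ given by Lemma \ref{torsion}, and the standard factor of $2$ relating $N_J$ to twice the $(1,0)$-projection of Lie brackets of $(0,1)$-vector fields.

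The main obstacle is the bookkeeping: one has to track four identifications simultaneously (namely $\tilde{F}(x) \leftrightarrow T^{0,1}_{X,x}$ via $\bar{\partial}_{J_X} F(x)$, $T_{\mathcal{Z}_n(Y)}/\mathbf{D}_{n,k} \leftrightarrow T^{1,0}_{X,x}$, the splitting implicit in Remark \ref{obv}, and the complex antilinearity of $\bar{\partial}_{J_X} F$) while keeping all signs and normalization constants consistent so that the scalar comes out to exactly $4$. A useful simplification is to work at $x$ with $J_X(x)$ equal to the standard $J_0$ on $\mathbb{R}^{2n}$, so that the only nontrivial information on both sides is concentrated in the first-order Taylor coefficients of $J_X$ at $x$, reducing the identity to a finite linear-algebraic verification. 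An alternative, more conceptual route is to observe that both $N_{J_X}$ and $\tilde{\Theta} \circ \tilde{F}$ are natural first-order differential invariants of $J_X$ that vanish simultaneously (cf.\ Theorem 1.6 of \cite{DG}), hence are proportional, and then to pin down the proportionality constant by evaluating on any single model where $N_{J_X}$ does not vanish.
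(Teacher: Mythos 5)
The paper's own proof is a one-line citation: it invokes Proposition 5.1 of \cite{DG}, which is precisely the identity $N_{J_X}(x)=4\,\theta(F(x))|_{\im(\bar{\partial}_{J_X}F(x))\times\im(\bar{\partial}_{J_X}F(x))}$, so the proposition is really just a restatement of that result in the language of the bundle morphism $\tilde{\Theta}$ and the lift $\tilde{F}$. Your route is genuinely different: you propose to re-derive the Demailly--Gaussier identity from scratch by a coordinate computation at the central point, using Lemma \ref{torsion} and Remark \ref{the} for $\theta(w_y)$ and the explicit form of $F$ from Proposition \ref{caliber}. That is a legitimate and self-contained alternative (it is essentially how the identity is established in \cite{DG}), and it has the merit of making the normalization transparent; the cost is that the entire content of the statement is the coefficient matching and the constant $4$, and your write-up defers exactly that step. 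As it stands, the proposal is a strategy rather than a proof: you assert that the coefficients "are precisely the $z_{ij}$" and that the constants combine to give $4$, but the factors you list ($\tfrac12\cdot\tfrac12$ per argument, $-2$ from Lemma \ref{torsion}, a further $2$) do not visibly multiply to $4$ without the detailed bookkeeping you acknowledge omitting, and the sign and the identification $\tilde{F}(x)\simeq T^{0,1}_{X,x}$, $T_{\mathcal{Z}_n(Y),F(x)}/\mathbf{D}_{n,k,F(x)}\simeq T^{1,0}_{X,x}$ must be fixed consistently for the equality (rather than proportionality) to hold.

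One concrete flaw: the "more conceptual" fallback at the end is not valid as stated. Two tensors vanishing simultaneously does not make them proportional; to argue by naturality you would need a uniqueness theorem for first-order $\GL$-natural invariants of almost complex structures of the given type, which you neither state nor prove, and even then you would still have to evaluate on a model to get the constant, which is no easier than the direct computation. The cleanest repair is either to carry out the coordinate computation in full, or to do what the paper does and simply quote Proposition 5.1 of \cite{DG}, checking only that $\tilde{\Theta}(\tilde{F}(x))=\theta(F(x))|_{\im(\bar{\partial}_{J_X}F(x))\times\im(\bar{\partial}_{J_X}F(x))}$ by unwinding the definitions.
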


\begin{proof}
This is a direct application of Proposition 5.1 \cite{DG}, since for any $x \in X,$
\[(4 \tilde{\Theta} \circ \tilde{F}) (x)=4 \tilde{\Theta}(\im(\bar{\partial}_{J_X}F(x)))=4\theta(F(x))|_{\im(\bar{\partial}_{J_X}F(x)) \times \im(\bar{\partial}_{J_X}F(x))}=N_{J_X}(x).\]
\end{proof}

Our next goal will be to relate characteristic classes of the vector bundle $(\rho \circ \tilde{F})^*(\Gro^\mathrm{o}_{n,k}/\mathbf{I}^\mathrm{o}_{n,k})$ on $X$ to the integrability of $J_X.$ This goal is motivated by a reformulation of the strategy to study Yau's challenge--something that we will investigate in a future article.
\addcontentsline{toc}{section}{Appendix}
\section*{Appendix}
Here we explain why the above approach, using ordinary Chern classes, is unlikely to work for Yau's Challenge in the case of $S^6.$ As mentioned in Remark \ref{sphere6}, the real $6$-dimensional sphere can be embedded into the octonions. More precisely, $\mathbb{O}$ is generated by the unit elements $(e_j)_{j=0}^7,$ where $e_0=1,$ and $e_j$ is purely imaginary for all other $j.$ The imaginary part of $\mathbb{O}$ is $\im(\mathbb{O})=\Span_{\mathbb{R}}(e_j)_{j=1}^7.$ For any $u,v \in \mathbb{O},$ the octonion inner product of $u$ and $v$ is given by $\langle u,v \rangle:= Re(u \bar{v})=u \cdot v,$ where the right hand side is the Euclidean inner product of $u$ and $v,$ regarded as elements of $\mathbb{R}^8.$ So, $S^6$ can be conceived of as the set of all unit length imaginary octonions, $S^6=\{u \in \im(\mathbb{O}) \mid \langle u,u \rangle=1\}.$ For any $x \in \mathbb{O}$ such that $x^2=-1,$ the alternativity of $\mathbb{O}$ ensures that the endomorphism $J_{\mathbb{O}}(x):\mathbb{O} \to \mathbb{O},$ $J_{\mathbb{O}}(x)(\zeta)=\zeta x,$ turns $\mathbb{O}$ into a complex vector space. In this way, $S^6 \subset \mathbb{O}$ inherits an almost complex structure $J_{\mathbb{O}},$ precisely that which at $u \in S^6$ is given by $J_{\mathbb{O}}(u).$ The non-integrability of this structure is due to the non-associativity of the octonions. Indeed, for any $\zeta, \eta \in T_{\mathbb{O}}|_{S^6,u},$ the Nijenhuis tensor is 
\begin{equation*}
\begin{split}
N_{J_{\mathbb{O}}}(u)(\zeta,\eta)&=[\zeta,\eta]+[J_{\mathbb{O}}(u)(\zeta),J_{\mathbb{O}}(u)(\eta)]+J_{\mathbb{O}}(u)([\zeta,J_{\mathbb{O}}(u)(\eta)]+[J_{\mathbb{O}}(u)(\zeta),\eta])\\
&=\big(\zeta(\eta u)-(\zeta u)\eta\big)-\big(\eta(\zeta u)-(\eta \zeta)u\big) \quad \cite{Kir},
\end{split}
\end{equation*}

and moreover, $J_{\mathbb{O}}$ is homotopically unique \cite{Bryant}.

Recall the complex, quasi-projective manifold $\mathcal{Z}_3(\mathbb{C}^8)=\mathbb{C}^8 \times Q$ of complex dimension $N_{3,4}=46,$ where $Q=\{(S',S'',\Sigma',\Sigma'') \in F_{(1,4)}(\mathbb{C}^8) \times F_{(1,4)}(\mathbb{C}^8) \mid \Sigma' \oplus \Sigma'' = \mathbb{C}^8\},$ and where $F_{(1,4)}(\mathbb{C}^8)$ is the complex projective manifold of flags of signature $(1,4).$ For convenience, we denote the relative tangent bundle $T_{\mathcal{Z}_3(\mathbb{C}^8)/\mathbb{C}^8}$ by $T_Q$ so that $T_{\mathcal{Z}_3(\mathbb{C}^8)}=\pi^*(T_{\mathbb{C}^8}) \oplus T_Q,$ and the affine bundle projection $\rho:Gr^\mathrm{o}_{3,4} \to \Gr^{\mathbb{C}}(\Delta_{3,4},3)$ by $d\pi.$

Now, let $J$ be a hypothetical integrable almost complex structure on $S^6,$ and $J_t:[0,1] \times S^6 \to \End(T_{S^6})$ be a homotopy from $J_0=J_{\mathbb{O}}$ to $J_1=J.$ For each $t \in [0,1],$ we get a universal embedding $F_t:(S^6,J_t) \hookrightarrow \mathcal{Z}_3(\mathbb{C}^8)$ in the following way: if $J^{\mathbb{C}}_t \in \End(T_{\mathbb{O}}|^{\mathbb{C}}_{S^6})$ is the complexification of $J_t,$ $\Sigma'_t(u)=\Eig(J^{\mathbb{C}}_t(u),i),$ $\Sigma''_t(u)=\Eig(J^{\mathbb{C}}_t(u),-i),$ $S'_u(t)=\Eig(J^{\mathbb{C}}_t(u)|_{S_u},i),$ and $S''_u(t)=\Eig(J^{\mathbb{C}}_t(u)|_{S_u},-i),$ where $S_u=\{0\} \oplus N^{\mathbb{C}}_{S^6/\mathbb{O},u},$ then \[F_t(u)=(u,S'_u(t),S''_u(t),\Sigma'_t(u),\Sigma''_t(u)).\] So, $J_t$ defines the isotopy $F_t$ of universal embeddings, which in turn gives rise to a homotopy of uniquely defined for $J_t$ lifts $\tilde{F}_t:[0,1] \times (S^6,J_t) \to \Gr^\mathrm{o}_{3,4},$ where $\tilde{F}_t(u)=\im(\bar{\partial}_{J_t} F_t(u)).$ Let $G_t=d\pi \circ \tilde{F}_t.$ It turns out that 

\begin{proposition}\label{chernc}
All of the Chern classes of $G^*_t(\Gr^\mathrm{o}_{3,4}/\mathcal{I}^\mathrm{o}_{3,4})$ vanish.
\end{proposition}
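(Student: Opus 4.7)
The target $G_t^*(\Gr^\mathrm{o}_{3,4}/\mathcal{I}^\mathrm{o}_{3,4})$ is a rank $9$ complex vector bundle on $S^6$, and since $H^{2i}(S^6;\mathbb{Z})=0$ for $i\in\{1,2\}$ and for $i\geq 4$, the only Chern class not forced to vanish on cohomological grounds is $c_3 \in H^6(S^6;\mathbb{Z}) \simeq \mathbb{Z}$. The substantive content of the proposition is therefore $c_3 = 0$.

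My first step is to apply Proposition \ref{sphere} to rewrite
\[G_t^*(\Gr^\mathrm{o}_{3,4}/\mathcal{I}^\mathrm{o}_{3,4}) \simeq G_t^*\Hom\big(\Lambda^2 \gamma_{3,4}, \pi^*_{3,4}(T_{\mathcal{Z}_3(\mathbb{C}^8)}/\mathcal{D}_{3,4})\big) = \Hom(\Lambda^2 V, W),\]
where $V := G_t^* \gamma_{3,4}$ and $W := F_t^*(T_{\mathcal{Z}_3(\mathbb{C}^8)}/\mathcal{D}_{3,4})$, using $\pi_{3,4} \circ G_t = F_t$. I then identify $V$ and $W$ as classical tensor bundles on $(S^6,J_t)$. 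For $W$, the proof of Proposition \ref{caliber} yields $(T_{\mathcal{Z}_3(\mathbb{C}^8)}/\mathcal{D}_{3,4})_{F_t(u)} \simeq \Sigma'_u/S'_u \simeq T^{1,0}_{S^6,J_t,u}$, so $W \simeq T^{1,0}_{S^6,J_t}$. For $V$, the antilinearity of $\bar\partial_{J_t,J_{\mathcal{Z}}} F_t$ as a map $(T_{S^6},J_t)\to (T_\mathcal{Z},J_\mathcal{Z})$, combined with the injectivity of $d\pi$ on $\im(\bar\partial_{J_t}F_t|_u)$ that comes from $\tilde{F}_t$ landing in $\Gr^\mathrm{o}_{3,4}$, identifies $V_u = d\pi\big(\im(\bar\partial_{J_t}F_t|_u)\big)$ with the conjugate complex vector space $T^{0,1}_{S^6,J_t,u}$; hence $V \simeq T^{0,1}_{S^6,J_t}$.

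Next I exploit the narrow cohomology of $S^6$. The line bundle $\det V$ has $c_1 \in H^2(S^6)=0$ and so is (smoothly) trivial, whence the canonical $\Lambda^2 V^* \simeq V \otimes (\det V)^*$ reduces to $\Lambda^2 V^* \simeq V$. Consequently,
\[\Hom(\Lambda^2 V, W) = \Lambda^2 V^* \otimes W \simeq V \otimes W \simeq T^{0,1}_{S^6,J_t} \otimes T^{1,0}_{S^6,J_t}.\]
Because $V$ is the complex conjugate of $W$, $c_3(V) = -c_3(W)$. Any rank-$3$ complex bundle $E$ on $S^6$ satisfies $c_1(E)=c_2(E)=0$, and therefore $\mbox{ch}(E) = 3 + c_3(E)/2$. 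Multiplicativity of the Chern character together with $H^{12}(S^6)=0$ gives
\[\mbox{ch}_3(V \otimes W) = 3\,\mbox{ch}_3(W) + 3\,\mbox{ch}_3(V) = \tfrac{3}{2}\bigl(c_3(V) + c_3(W)\bigr) = 0,\]
and since $V \otimes W$ has rank $9$ with $c_1=c_2=0$ on $S^6$, $c_3(V \otimes W) = 2\,\mbox{ch}_3(V \otimes W) = 0$.

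The main obstacle is the identification of $V$ with $T^{0,1}_{S^6,J_t}$ in step two. The analogous identification of $W$ is essentially spelled out in the proof of Proposition \ref{caliber}, but $V$ demands carefully tracking $\im(\bar\partial_{J_t}F_t)$ and exploiting the antilinear character of $\bar\partial$ to recognize it as the conjugate tangent bundle. Once these identifications are secured, the remaining algebraic topology is elementary: only $c_3$ can be nonzero on $S^6$, and the conjugacy $V \simeq \overline{W}$ forces the desired cancellation.
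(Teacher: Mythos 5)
Your argument is correct and follows essentially the same route as the paper: the geometric content is exactly Lemma \ref{l1} (identifying $G_t^*\gamma_{3,4}$ with $T^{0,1}_{S^6}$ by computing $d\pi\circ\bar{\partial}_{J_t}F_t$, and the quotient $T_{\mathcal{Z}_3(\mathbb{C}^8)}/\mathcal{D}_{3,4}$ with $T_{S^6}$ via transversality), after which only $c_3$ can survive on $S^6$ and the conjugacy $V\simeq\overline{W}$ forces the cancellation. The only divergence is in the final bookkeeping, where you use $\Lambda^2V^*\simeq V\otimes(\det V)^*\simeq V$ together with the Chern character, while the paper uses explicit Chern-root identities for $c_3(E\otimes V)$ and $c_3(\Lambda^2E)$; both are equally elementary and give the same cancellation.
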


Let us first observe that there is a vector bundle isomorphism

\begin{lemma}\label{l1}
$G^*_t(\Gr^\mathrm{o}_{3,4}/\mathcal{I}^\mathrm{o}_{3,4}) \simeq \Lambda^2 \overline{T^*_{S^6}} \otimes T_{S^6}.$
\end{lemma}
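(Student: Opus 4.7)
The plan is to apply Proposition \ref{sphere} to rewrite $\Gr^\mathrm{o}_{3,4}/\mathcal{I}^\mathrm{o}_{3,4}$ as $\Hom(\Lambda^2 \gamma_{3,4},\pi^*_{3,4}(T_{\mathcal{Z}_3(\mathbb{C}^8)}/\mathbf{D}_{3,4}))$, pull this vector bundle back along $G_t = \rho \circ \tilde{F}_t$, and identify each constituent factor in terms of intrinsic data on $S^6$. First I would observe that $\pi_{3,4}\circ G_t = F_t$, so $G_t^*\pi^*_{3,4}(T_{\mathcal{Z}_3(\mathbb{C}^8)}/\mathbf{D}_{3,4}) = F_t^*(T_{\mathcal{Z}_3(\mathbb{C}^8)}/\mathbf{D}_{3,4})$. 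The proof of Proposition \ref{caliber} provides the canonical fiberwise identification $T_{\mathcal{Z}_3(\mathbb{C}^8),F_t(u)}/\mathbf{D}_{3,4,F_t(u)} \simeq \Sigma'_u/S'_u \simeq T^{1,0}_{S^6,u}$ with respect to $J_t$, which globally reads $F_t^*(T_{\mathcal{Z}_3(\mathbb{C}^8)}/\mathbf{D}_{3,4}) \simeq T_{S^6}$ as complex vector bundles, with $T_{S^6}$ carrying the complex structure $J_t$.

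Next I would identify $G_t^*\gamma_{3,4}$, whose fiber at $u$ is $d\pi(\im(\bar{\partial}_{J_t}F_t(u)))$. Since $\tilde{F}_t(u) \in \Gr^\mathrm{o}_{3,4}$, the map $d\pi$ restricts to a $\mathbb{C}$-linear isomorphism on $\im(\bar{\partial}_{J_t}F_t(u))$, so it suffices to identify $\im(\bar{\partial}_{J_t}F_t)$ itself as a complex subbundle of $F_t^*T_{\mathcal{Z}_3(\mathbb{C}^8)}$. The defining formula $\bar{\partial}_{J_t}F_t = \frac{1}{2}(dF_t + J_{\mathcal{Z}_3(\mathbb{C}^8)}\circ dF_t\circ J_t)$ yields $\bar{\partial}_{J_t}F_t \circ J_t = -J_{\mathcal{Z}_3(\mathbb{C}^8)}\circ \bar{\partial}_{J_t}F_t$, so $\bar{\partial}_{J_t}F_t$ is $\mathbb{C}$-antilinear from $(T_{S^6},J_t)$ to $T_{\mathcal{Z}_3(\mathbb{C}^8)}$, or equivalently $\mathbb{C}$-linear from $\overline{T_{S^6}}$ to $T_{\mathcal{Z}_3(\mathbb{C}^8)}$. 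The totally real, transverse, $J_t$-inducing character of $F_t$ from Proposition \ref{caliber} forces this map to be injective, so $\im(\bar{\partial}_{J_t}F_t) \simeq \overline{T_{S^6}}$ and hence $G_t^*\gamma_{3,4} \simeq \overline{T_{S^6}}$.

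Combining the two identifications,
\[
G_t^*\bigl(\Gr^\mathrm{o}_{3,4}/\mathcal{I}^\mathrm{o}_{3,4}\bigr) \simeq \Hom(\Lambda^2 \overline{T_{S^6}}, T_{S^6}) \simeq \Lambda^2 \overline{T^*_{S^6}} \otimes T_{S^6},
\]
where the last step uses the natural isomorphism $(\overline{E})^* \simeq \overline{E^*}$ valid for any complex vector bundle $E$. The main obstacle I anticipate is the careful complex-structure bookkeeping: one must verify the sign in the antilinearity relation for $\bar{\partial}_{J_t}F_t$ so that the pullback of the tautological fiber is identified with $\overline{T_{S^6}}$ rather than $T_{S^6}$, and one must check that $d\pi$, which is $\mathbb{C}$-linear for the ambient complex structure on $\mathbb{C}^8$, faithfully transports this conjugate structure into $\mathbf{\Delta}_{3,4}$. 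Once these identifications are in place, the lemma follows formally from Proposition \ref{sphere}.
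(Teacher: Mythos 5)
Your proposal is correct and follows essentially the same route as the paper: invoke Proposition \ref{sphere}, identify $G_t^*\pi_{3,4}^*(T_{\mathcal{Z}_3(\mathbb{C}^8)}/\mathcal{D}_{3,4})\simeq T_{S^6}$ via transversality, and identify $G_t^*\gamma_{3,4}\simeq\overline{T_{S^6}}$ from the conjugate-linearity of $\bar{\partial}_{J_t}F_t$. The only cosmetic difference is that the paper establishes the latter by explicitly computing $d\pi(F_t(u))(\bar{\partial}_{J_t}F_t(u)(\zeta))=\tfrac{1}{2}(\zeta+iJ_t(u)\zeta)$, hence $G_t(u)=T^{0,1}_{S^6,u}$, whereas you derive the same conclusion from the formal relation $\bar{\partial}_{J_t}F_t\circ J_t=-J_{\mathcal{Z}_3(\mathbb{C}^8)}\circ\bar{\partial}_{J_t}F_t$ together with injectivity.
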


\begin{proof}
From Proposition \ref{sphere}, $\Gr^\mathrm{o}_{3,4}/\mathcal{I}^\mathrm{o}_{3,4} \simeq \Lambda^2 \gamma^*_{3,4} \otimes \pi^*_{3,4}(T_{\mathcal{Z}_3(\mathbb{C}^8)}/\mathcal{D}_{3,4}).$ By the transversality of the universal embedding $F_t:(S^6,J_t) \hookrightarrow \mathcal{Z}_3(\mathbb{C}^8),$ ${F_t}_*(T_{S^6}) \simeq T_{\mathcal{Z}_3(\mathbb{C}^8)}/\mathcal{D}_{3,4}.$ Then, since $F_t = \pi_{3,4} \circ G_t,$ $G^*_t\big(\pi^*_{3,4}(T_{\mathcal{Z}_3(\mathbb{C}^8)}/\mathcal{D}_{3,4})\big) \simeq T_{S^6}.$

Let $u \in S^6,$ $\zeta \in T_{S^6,u},$ and note that 

\begin{equation*}
\begin{split}
\bar{\partial}_{J_t} F_t(u)(\zeta)&=\frac{1}{2}\big(dF_t(u)(\zeta) + J_{\mathcal{Z}_3(\mathbb{C}^8)}(F_t(u)) \circ dF_t(u) \circ J_t(u)(\zeta)\big) \\
&=\frac{1}{2}\big(dF_t(u)(\zeta) + idF_t(u)(J_t(u) (\zeta))\big),
\end{split}
\end{equation*}

where $J_{\mathcal{Z}_3(\mathbb{C}^8)}$ is the given complex structure on $\mathcal{Z}_3(\mathbb{C}^8),$ which we may assume acts by multiplication by $i$ on fibers. The push-forward of $F_t$ at $u \in S^6$ is an $\mathbb{R}$-linear map \[dF_t(u)=Id_{T_{S^6,u}} \oplus \phi^t_u:T_{S^6,u} \to T_{\mathbb{C}^8,u} \oplus T_{Q,(S'_u(t),S''_u(t),\Sigma'_t(u),\Sigma''_t(u))},\] where $Id_{T_{S^6,u}}$ is the $\mathbb{C}^8$-component, and $\phi^t_u$ is the $Q$-component of the push-forward of $F_t$ at $u.$ Now, since the push-forward of $\pi:\mathcal{Z}_3(\mathbb{C}^8) \to \mathbb{C}^8$ is essentially the identity map, meaning that at any $w \in \mathcal{Z}_3(\mathbb{C}^8),$ $d\pi(w)=I_8 \oplus \mathbf{0}_{8 \times 38},$ we obtain that

\begin{equation*}
\begin{split}
d\pi(F_t(u))(\bar{\partial}_{J_t} F_t(u)(\zeta))&=\frac{1}{2}\big(d\pi(F_t(u))(\zeta+\phi^t_u(\zeta))+d\pi(F_t(u))(iJ_t(u)(\zeta)+i\phi^t_u(J_t(u)(\zeta)))\big) \\
&=\frac{1}{2}(\zeta+iJ_t(u)(\zeta)).
\end{split}
\end{equation*}

Therefore, $G_t(u)=\{\frac{1}{2}(\zeta+iJ_t(u)(\zeta)) \mid \zeta \in T_{S^6,u}\}=T^{0,1}_{S^6,u},$ $G^*_t \gamma_{3,4} = T^{0,1}_{S^6} \simeq \overline{T_{S^6}},$ and so $G^*_t \gamma^*_{3,4} \simeq \overline{T^*_{S^6}},$ where $\overline{T_{S^6}}$ is the conjugate bundle to $T_{S^6},$ i.e.\ $T_{S^6}$ endowed with the complex structure $-J_{\mathbb{O}}.$ And then,

\begin{equation*}
\begin{split}
G^*_t(\Gr^\mathrm{o}_{3,4}/\mathcal{I}^\mathrm{o}_{3,4})& \simeq \Lambda^2 G^*_t \gamma^*_{3,4} \otimes G^*_t\big(\pi^*_{3,4}(T_{\mathcal{Z}_3(\mathbb{C}^8)}/\mathcal{D}_{3,4})\big) \\
& \simeq \Lambda^2 \overline{T^*_{S^6}} \otimes T_{S^6}.
\end{split}
\end{equation*}
\end{proof}

The proof of Proposition \ref{chernc} is straightforward once we review some basic facts about Chern classes of complex vector bundles. Let $E \to X$ be a rank $m$ complex vector bundle with Chern roots $\alpha_i:=c_1(L_i),$ for $1 \leq i \leq m,$ and $V \to X$ be a rank $n$ complex vector bundle with Chern roots $\beta_j:=c_1(L'_j),$ for $1 \leq j \leq n.$ For any set of indeterminates $x_1,\dots,x_r,$ let $\sigma_k(x_1,\dots,x_r)$ be the $k$-th elementary symmetric polynomial in the $x_i.$ The Chern roots of $E\ \otimes V$ are $\alpha_i + \beta_j,$ where $1 \leq i \leq m$ and $1 \leq j \leq n.$ By definition, the $k$-th Chern class of $E \otimes V$ is 

\begin{equation*}
\begin{split}
c_k(E \otimes V)&=\sigma_k(\alpha_1 + \beta_1,\dots,\alpha_m + \alpha_n)\\
&=\sum_{1 \leq i_1 < \dots < i_k \leq m} \sum_{1 \leq j_1 < \dots < j_k \leq n} (\alpha_{i_1} + \beta_{j_1}) \dots (\alpha_{i_k} + \beta_{j_k}),
\end{split}
\end{equation*}

and we can write 

\[(\alpha_{i_1} + \beta_{j_1}) \dots (\alpha_{i_k} + \beta_{j_k})=\alpha_{i_1} \dots \alpha_{i_k} + \beta_{j_1} \dots \beta_{j_k} + p^{i_1,\dots,i_k}_{j_1,\dots,j_k},\] for some $p^{i_1,\dots,i_k}_{j_1,\dots,j_k} \in  \mathbb{Z}[\alpha_{i_1},\dots,\alpha_{i_k},\beta_{j_1},\dots,\beta_{j_k}].$ Then, since 

\begin{equation*}
\begin{split}
\sigma_k(\alpha_1 + \beta_1,\dots,\alpha_m + \alpha_n)&=\sum_{1 \leq j_1 < \dots < j_k \leq n} \sum_{1 \leq i_1 < \dots < i_k \leq m}  \alpha_{i_1} \dots \alpha_{i_k} + \sum_{1 \leq i_1 < \dots < i_k \leq m} \sum_{1 \leq j_1 < \dots < j_k \leq n} \beta_{j_1} \dots \beta_{j_k} + \\
& \sum_{1 \leq i_1 < \dots < i_k \leq m} \sum_{1 \leq j_1 < \dots < j_k \leq n} p^{i_1,\dots,i_k}_{j_1,\dots,j_k} \\
&=\sum_{1 \leq j \leq n} \sum_{1 \leq i_1 < \dots < i_k \leq m}  \alpha_{i_1} \dots \alpha_{i_k} + \sum_{1 \leq i \leq m} \sum_{1 \leq j_1 < \dots < j_k \leq n} \beta_{j_1} \dots \beta_{j_k} +\\
&\sum_{1 \leq i_1 < \dots < i_k \leq m} \sum_{1 \leq j_1 < \dots < j_k \leq n} p^{i_1,\dots,i_k}_{j_1,\dots,j_k} \\
&=n \sigma_k(\alpha_1,\dots,\alpha_m) + m \sigma_k(\beta_1,\dots,\beta_n) + P, 
\end{split}
\end{equation*}
where $P=\sum_{1 \leq i_1 < \dots < i_k \leq m} \sum_{1 \leq j_1 < \dots < j_k \leq n} p^{i_1,\dots,i_k}_{j_1,\dots,j_k} \in \mathbb{Z}[c_1(E),\dots,c_{k-1}(E),c_1(V),\dots,c_{k-1}(V)],$ it follows that

\begin{equation}
\begin{split}
c_k(E \otimes V)&=n c_k(E) + m c_k(V) + P.
\end{split}
\end{equation}

Moreover, if $m=3,$ 

\begin{equation}
c_3(\Lambda^2 E)=-c_3(E)+c_1(E)c_2(E).
\end{equation}

This is because $c_1(E)=\alpha_1+\alpha_2+\alpha_3,$ $c_2(E)=\alpha_1 \alpha_2 + \alpha_1 \alpha_3 + \alpha_2 \alpha_3,$ $c_3(E)=\alpha_1 \alpha_2 \alpha_3,$ and the Chern roots of $\Lambda^2 E$ are $\alpha_1 + \alpha_2, \alpha_1 + \alpha_3, \alpha_2 + \alpha_3,$ implying that

\begin{equation*}
\begin{split}
c_3(\Lambda^2 E)&=\sigma_3(\alpha_1 + \alpha_2, \alpha_1 + \alpha_3, \alpha_2 + \alpha_3) \\
&=(\alpha_1 + \alpha_2)(\alpha_1 + \alpha_3)(\alpha_2 + \alpha_3) \\
&=(\alpha_1 \alpha_3 \alpha_2 + \alpha_2 \alpha_1 \alpha_3) + (\alpha^2_1 \alpha_2 + \alpha^2_1 \alpha_3) + (\alpha_2 \alpha_1 \alpha_2 + \alpha_2 \alpha_3 \alpha_2) + (\alpha_1 \alpha^2_3 + \alpha_2 \alpha^2_3) \\
&=2c_3(E)+(\alpha^2_1 \alpha_2 + \alpha^2_1 \alpha_3 + \alpha_2 \alpha_1 \alpha_2 + \alpha_2 \alpha_3 \alpha_2 + \alpha_1 \alpha^2_3 + \alpha_2 \alpha^2_3)\\
&=2c_3(E)+(c_1(E)c_2(E)-3c_3(E))\\
&=-c_3(E)+c_1(E)c_2(E).
\end{split}
\end{equation*}

\noindent \emph{Proof of Proposition \ref{chernc}.} Since $H^{2k}(S^6;\mathbb{Z})=0$ for all $k \neq 3,$ the only not automatically trivial Chern classes of $G^*_t(\Gr^\mathrm{o}_{3,4}/\mathcal{I}^\mathrm{o}_{3,4})$ are the 3rd Chern classes. However, from Lemma \ref{l1} alongside with equations (1) and (2), we conclude that $c_3(G^*_t(\Gr^\mathrm{o}_{3,4}/\mathcal{I}^\mathrm{o}_{3,4}))=-3c_3(\overline{T^*_{S^6}})+3c_3(T_{S^6})=0. \quad\quad\quad\quad\quad\quad\quad\quad \qed$

\noindent
Gabriella Clemente                                

\noindent
e-mail: clemente6171@gmail.com

\begin{thebibliography}{9}
\bibitem{Bryant}
R.\ Bryant, \textit{S-S.\ Chern's study of almost complex structures on the six-sphere}, Asian J.\ Math.\ 10 (3) (2006), 561--605.

\bibitem{fiber}
M.\ Crabb and I.\ James, \textit{Fiberwise Homotopy Theory}, Springer Monogr.\ Math., Springer-Verlag, London, 1998.

\bibitem{directed} 
J-P.\ Demailly, \textit{Algebraic criteria for Kobayashi hyperbolic projective varieties and jet differentials.} Algebraic geometry: Santa Cruz 1995 (Santa Cruz, CA, 1995), 285--360, Proc.\ Sympos.\ Pure Math.\ 62, Part 2, Amer.\ Math.\ Soc., Providence, RI, 1997. 

\bibitem{CIME}
J-P.\ Demailly, \textit{Algebraic embeddings of complex and almost complex structures}, slides of a talk given at the CIME School on ``Non-K{\"a}hler geometry" (Cetraro, Italy, July 12, 2018), https://www-fourier.ujf-grenoble.fr/$\sim$demailly/manuscripts/cetraro$\_$2018-print.pdf.

\bibitem{DG} 
J-P.\ Demailly and H.\ Gaussier, \textit{Algebraic embeddings of smooth almost complex structures}, J.\ Eur.\ Math.\ Soc.\ 19 (2017), 3391--3419.

\bibitem{Kir}
L.O.\ Diaz, \textit{A note on Kirchoff's theorem for almost complex spheres, I}, arXiv:1804.05794.

\bibitem{YC}
S-T.\ Yau, \emph{Open problems in geometry.} Differential geometry: partial differential equations on manifolds (Los Angeles, CA, 1990), 1--28, Proc.\ Sympos.\ Pure Math.\ 54, Part 1, Amer.\ Math.\ Soc., Providence, RI, 1993. 
\end{thebibliography}
\end{document}